\documentclass[journal,twoside,web]{ieeecolor}



\newtheorem{theorem}{Theorem}[section]
\newtheorem{lemma}[theorem]{Lemma}
\newtheorem{definition}[theorem]{Definition}

\newtheorem{proposition}[theorem]{Proposition}
\newtheorem{problem}{Problem}
\newtheorem{remark}[theorem]{Remark}
\newtheorem{assumption}{Assumption}
\newtheorem{example}[theorem]{Example}

\newcommand{\mc}{\mathcal}

\newcommand{\im}{\operatorname{Im}}

\newcommand{\rank}[1]{\operatorname{rank} \left( #1 \right)}
\newcommand{\spec}{\sigma}

\newcommand{\real}{\mathbb{R}} 

\newcommand{\naturalpos}{\mathbb{N}_{>0}}

\newcommand{\complex}{\mathbb{C}}
\newcommand{\tsp}{\mathsf{T}} 
\newcommand{\pinv}{\dagger} 
\newcommand{\inv}{{\negat 1}} 
\newcommand{\negat}{\scalebox{0.75}[.9]{\( - \)}}
\newcommand*{\QEDB}{\hfill\ensuremath{\square}}
\newcommand*{\QEDBL}{\hfill\ensuremath{\blacksquare}}
\newcommand{\diag}{\operatorname{diag}} 
  
\newcommand{\bA}{\mathbf{A}}  
\newcommand{\lmax}[1]{\sbs{\lambda}{max}\left(#1\right)}

\newcommand{\map}[3]{#1: #2 \rightarrow #3}
\newcommand{\setdef}[2]{\{#1 \; : \; #2\}}
\newcommand{\sbs}[2]{{#1}_{\textup{#2}}}

\newcommand{\until}[1]{\{1,\dots,#1\}}
\newcommand{\norm}[1]{\Vert #1 \Vert}

\DeclareMathAlphabet{\mymathbb}{U}{BOONDOX-ds}{m}{n}
\newcommand{\one}{\mathds{1}}
\newcommand{\zero}{\mymathbb{0}} 
 
\usepackage[figurename=Fig.,font=footnotesize]{caption}


\usepackage{hyperref}
\usepackage{graphicx,color}
\graphicspath{{./IMG/}{images/},{./IMG-BIOS//},{../IMG/}}
\usepackage{amsmath}
\usepackage{amssymb}
\usepackage{mathrsfs}
\usepackage{subfigure}
\usepackage{url}
\usepackage{booktabs}
\usepackage{array}
\usepackage[table]{xcolor}
\usepackage{mathtools} 		
\usepackage{epstopdf}
\usepackage{cite}
\usepackage{algorithm}
\usepackage{algorithmic}
\usepackage{upgreek}
\usepackage{dsfont}
\usepackage{generic}
\usepackage{pifont} 


\def\BibTeX{{\rm B\kern-.05em{\sc i\kern-.025em b}\kern-.08em
    T\kern-.1667em\lower.7ex\hbox{E}\kern-.125emX}}
\markboth{\journalname}
{Bianchin \MakeLowercase{\textit{et. al.}}: $k-$dimensional Agreement in Multi-agent Systems}


\title{$k$-dimensional Agreement in Multi-agent~Systems}

\begin{document}

\author{Gianluca Bianchin, Miguel Vaquero, Jorge Cort\'{e}s, and  Emiliano Dall'Anese\thanks{
G. Bianchin is with the ICTEAM Institute and Dept. of Mathematical Engineering at the University of Louvain. 
M. Vaquero is with the School of Science and Technology, IE University. J. Cort\'es is with the Department of Mechanical and Aerospace Engineering, University of California San Diego. 
E. Dall'Anese is with the Department of Electrical and Computer Engineering, Boston University.
Corresponding author: G. Bianchin. 
Email: \texttt{gianluca.bianchin@uclouvain.be}.
} \hspace{-1cm}}

\maketitle

\begin{abstract}
Given a network of agents, we study the problem of designing a distributed 
algorithm that computes $k$ independent weighted means of the network's initial 
conditions (namely, the agents agree on a $k$-dimensional space). 
Akin to average consensus, this problem finds applications in distributed 
computing and sensing, where agents seek to simultaneously evaluate $k$ 
independent functions at a common point by running a single coordination 
algorithm.
We show that linear algorithms can agree on quantities that are oblique 
projections of the vector of initial conditions, and we provide techniques to 
design protocols that are compatible with a pre-specified communication graph. 
More broadly, our results show that a single agreement algorithm can solve $k$ 
consensus problems simultaneously at a fraction of the complexity of classical 
approaches but, in general, it requires higher network connectivity.
\end{abstract}

\section{Introduction}
\label{sec:1}


\IEEEPARstart{C}{oordination}  and consensus algorithms are central to many 
network synchronization  problems, including rendezvous, distributed 
optimization, and distributed computation and sensing. 
One of the most established coordination algorithms is that of consensus, 
which can be used to compute asymptotically a \textit{common weighted average} 
of the agents' initial states -- see, for example, the representative 
works~\cite{VB-JH-AO-JT:05,RO-RM:04,WR-RWB-EMA:05}. 
This work departs from the observation that, in several applications, it 
is instead of interest to compute \textit{multiple weighted averages} of the 
initial states, each characterized by a different weighting. 
Relevant examples of this problem include distributed computation~\cite{DP:00b} 
(where agent-specific weights are used to describe heterogeneous computational 
objectives across agents), task allocation problems~\cite{HC-LB-JH:09} 
(where agent-specific weights are used to model the heterogeneous 
computational capabilities of the agents), distributed 
sensing~\cite{LZ-AA:05,FP-RC-AB-FB:10n} (where agent-specific weights describe 
heterogeneous accuracies of different sensing devices), and robotic 
formation~\cite{KO-MP-HA:15} (where agent-specific weights allow one to impose 
agent-specific configurations relative to other agents).

Mathematically, given a vector $x_0 \in \mathbb{R}^{n}$ of initial states or 
estimates -- such that each of its entries is known only locally by a 
single agent -- and a rank-$k$ matrix $W \in \mathbb{R}^{n \times n},$ whose 
rows describe the weights of the means to be computed, we say that the group 
\emph{reaches a $k$-dimensional 
agreement} when, asymptotically, the vector of agents' states converges to 
$Wx_0$.
The goal of this paper is to design distributed control protocols that enable 
the agents to reach an agreement.
\begin{figure}[t]
\centering
\subfigure[]{\includegraphics[width=.34\columnwidth]{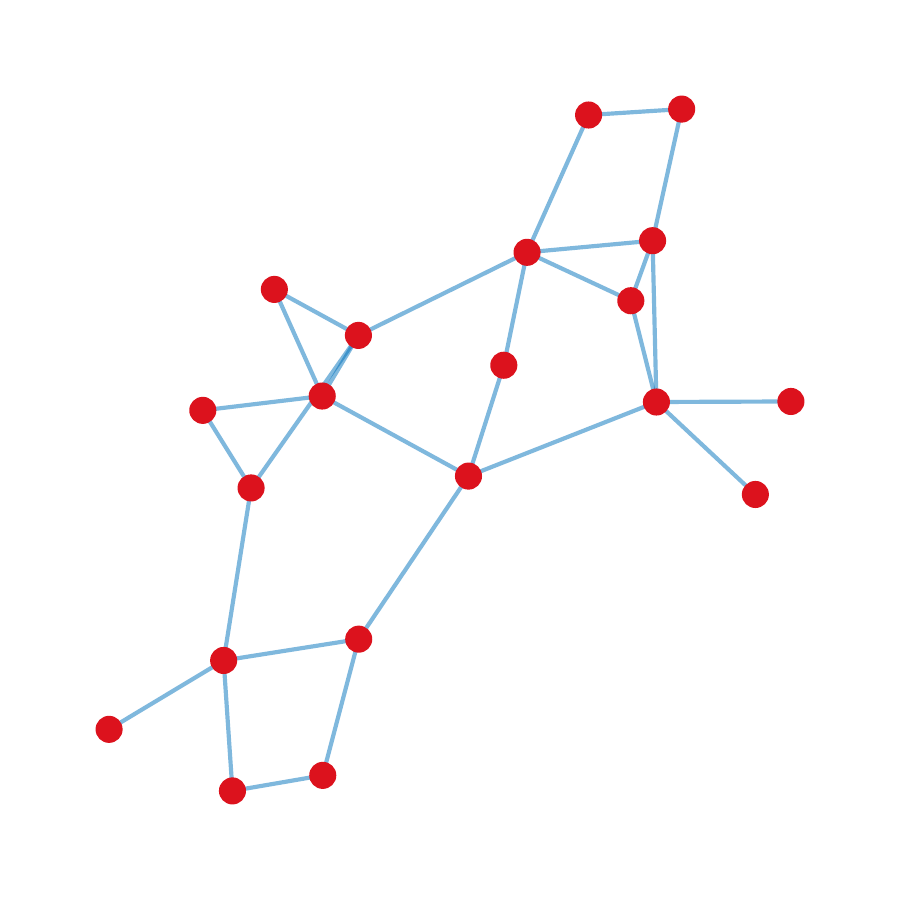}} \hspace{1.2cm}
\subfigure[]{\includegraphics[width=.34\columnwidth]{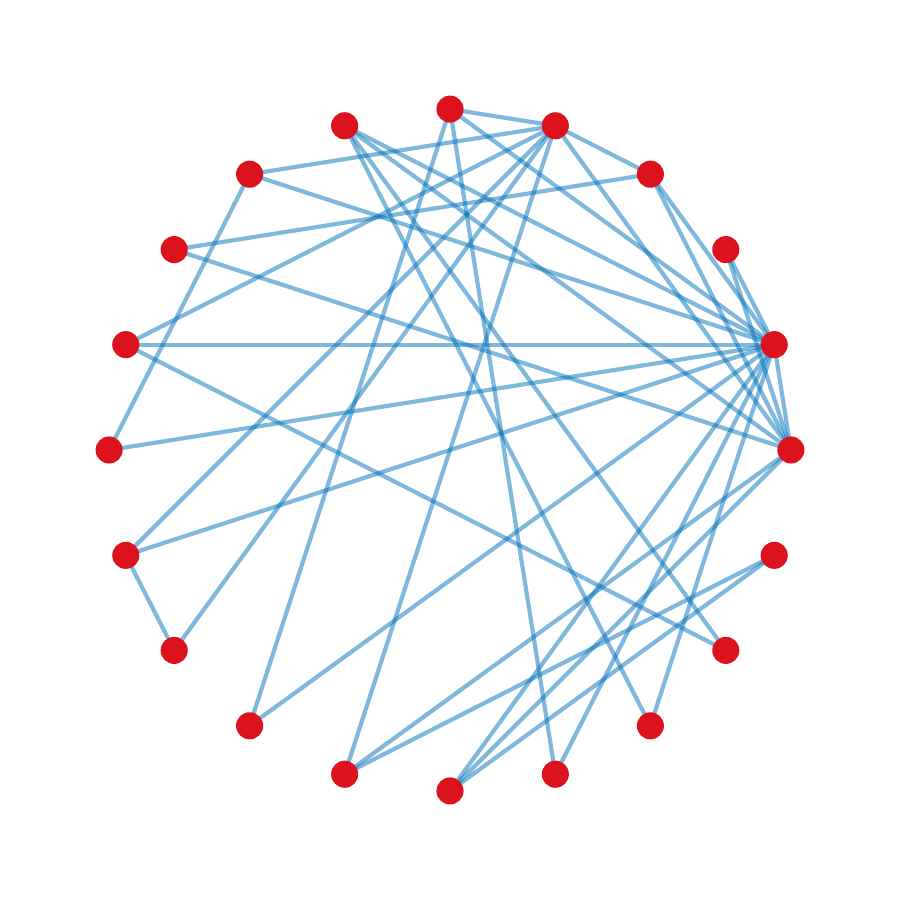}} \\
\subfigure[]{\includegraphics[width=.49\columnwidth]{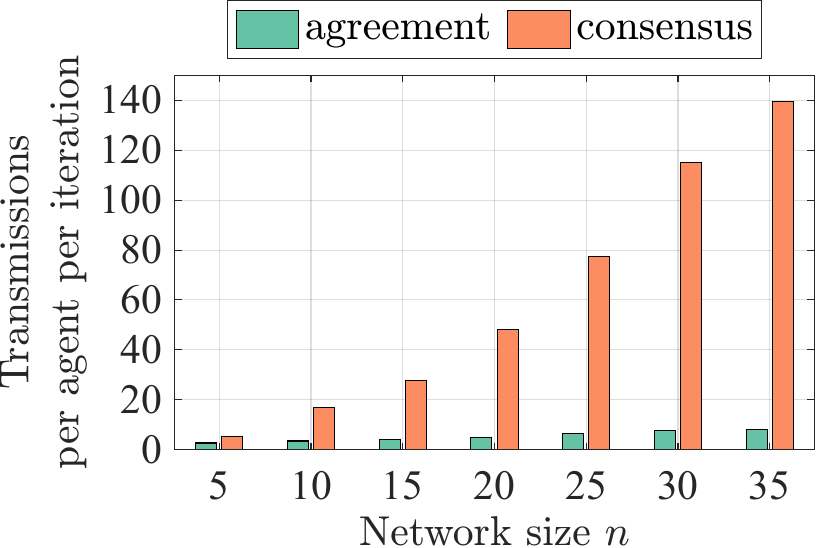}} \hfill
\subfigure[]{\includegraphics[width=.49\columnwidth]{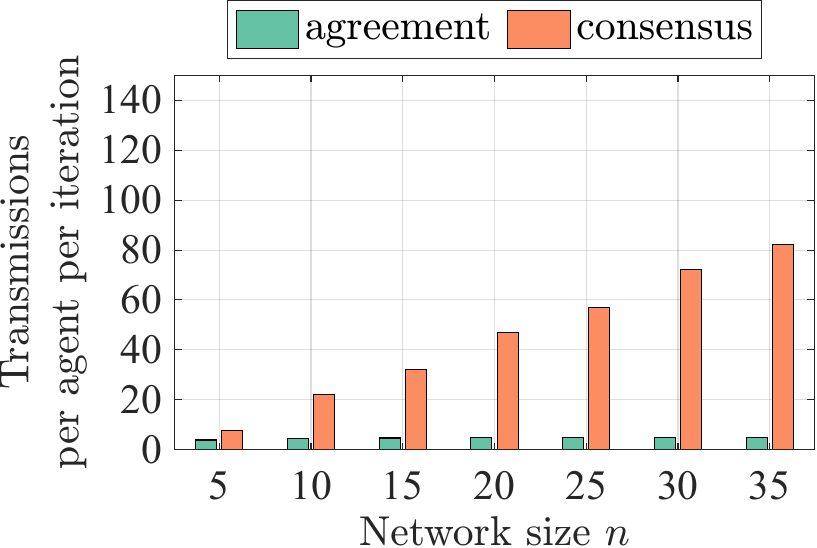}} 
\caption{Communication complexity of running $k$ consensus algorithms in 
parallel vs one $k$-dimensional agreement algorithm (proposed in this paper) to compute 
$k=\left\lfloor \frac{n}{2} \right\rfloor$ weighted average 
means of a global quantity. (a) and (c) Erd\H{o}s–R\'{e}nyi network model. (b) 
and (d) Barabasi-Albert model. Bars denote the average number of transmissions 
per iteration per~agent. See Section~\ref{sec:4-b}.}
\label{fig:communicationComplexity}
\vspace{-.5cm}
\end{figure}
A natural approach to tackle this problem consists of executing $k$ 
consensus algorithms~\cite{RO-RM:04} in parallel (see 
Fig.~\ref{fig:communicationComplexity} -- simulation details are provided in 
Section~\ref{subsec:applications}), each designed to converge to 
a specific row of $W x_0$. Unfortunately, the communication and 
computational complexities of such an approach do not scale with the network 
size (cf.~Fig.~\ref{fig:communicationComplexity}); thus, our objective here is 
to reach agreements by running \textit{a single} distributed algorithm.

{\bf \color{mblue} Related work.} 
The problem studied in this work is closely related to that of consensus. 
Consensus algorithms have been extensively studied in the literature. A  list 
of representative topics (necessarily incomplete) includes: sufficient and/or 
necessary conditions for consensus~\cite{JNT:84,AJ-JL-ASM:02,RO-RM:04,MC-ASM-BDOA:06,WR-RB:05,JMH-JNT:13}, convergence rates~\cite{AO-JNT:07,LX-SB:04}, and 
robustness investigations~\cite{DD-NL-SP-ES-WW:86}.
In contrast with constrained consensus problems~\cite{AN-AO-PP:10,FM:20} (where 
the agents' states must satisfy agent-dependent constraints during 
transients, but the desired asymptotic value is unconstrained), in our 
setting the values are instead constrained at convergence, and thus \textit{the 
agents' states do not coincide} in general.
Clustering-based consensus~\cite{SA-IS-SM-DN:17,WL-HD:09,GB-AC-ML-GM:15-cdc} 
is a closely related problem where the states of agents in the same graph 
cluster converge to identical values, while inter-cluster states can differ. 
Differently from this setting, which is obtained by using weakly-connected 
communication graphs to separate the state of different communities, here we 
are interested in cases where the asymptotic state of each agent depends on 
every other agent in the network.
To the best of our knowledge, the agreement problem proposed here has not been 
considered before in the literature. A relevant contribution is that of scaled 
consensus~\cite{SR:15}, which can be seen as a special case of the agreement 
problem studied here, obtained by letting $k=1.$ As shown shortly below, the 
extension to $k > 1$ is non-trivial as standard assumptions made for 
consensus are inadequate, see the discussion in 
Example~\ref{ex:agreementNotPossible}. 
%

{\bf \color{mblue} Contributions.}
The contribution of this work is threefold.
First, we formulate the $k$-dimensional agreement problem, and we discuss the 
fundamental limitations of linear protocols in solving this problem. 
We provide a first main result, which consists of a full characterization of the 
agreement space for linear protocols. 
Second, we provide an algebraic characterization of all agreement protocols that 
are consistent with a pre-specified communication graph.
We show how such characterization can be used to design efficient numerical 
algorithms for agreement. 
Finally, we illustrate the applicability of the framework on a regression 
problem through simulations.


\section{Preliminaries}
\label{sec:2}

{\bf \color{mblue} Notation.}
$\complex$ and $\real$ denote, respectively, the set of complex and real 
numbers.
For $x \in \complex$, $\Re(x)$ and $\Im (x)$ denote its real and imaginary 
parts, respectively.
Given $x\in \real^n, u\in\real^m$, $(x,u) \in \real^{n+m}$ denotes their 
concatenation. 
$\one_n \in \real^n$ is the vector of all ones, $I_n \in \real^{n \times n}$ is 
the identity matrix, $\zero_{n,m} \in \real^{n\times m}$ is the matrix of all 
zeros -- subscripts are dropped when dimensions are clear from the context.
For $A \in \real^{n \times n}$, 
$\spec(A) =\setdef{ \lambda \in \complex}{\det (\lambda I -A)=0}$ is  its 
spectrum, and 
$\lmax{A} = \max \setdef{\Re(\lambda)}{ \lambda \in \sigma(A)}$ is
its spectral abscissa.
For $A \in \real^{n \times m}$, $\im(A)$ and $\ker(A)$ denote its image and 
null space, respectively.
A polynomial with real coefficients
$p(\lambda)$ is \textit{stable} if all its roots have negative real part.




{\bf \color{mblue} Graph-theoretic notions.}
A \textit{digraph} is $\mc G = (\mc V, \mc E)$, where $\mc V = \until n$ 
and $\mc E \subseteq \mc V \times \mc V$ are, respectively, the set of nodes 
and edges. $(i,j) \in \mc E$ denotes a directed edge from $j$ to $i$. 
$\mc G = (\mc V, \mc E, A)$ indicates that $\mc G$ is a 
\textit{weighted digraph,} whereby the entries of the \textit{adjacency matrix} $A \in \real^{n \times n}$ describe the edge weights. 
For $A=[a_{ij}]$ to be a valid adjacency matrix, we must have: 
$(i,j) \not \in \mc E$ implies $a_{ij} = 0$.
If this holds, we say that a matrix $A$ is \textit{consistent} with $\mc G.$
A graph is \textit{complete} if there exists an edge connecting every pair 
of nodes.
A \textit{path} is a sequence of edges $(e_1, e_2, \dots )$, such that the 
initial node of each edge is the final node of the preceding one. 
The \textit{length} of a path is the number of edges contained in 
$(e_1, e_2, \dots )$.
A graph is \textit{strongly connected} if, for any 
$i, j \in \mc V$, there is a path from $i$ to $j$.
A \textit{closed path} is a path whose initial and final vertices coincide.
A closed path is a \textit{cycle} if, going along the path, one 
reaches no node, other than the initial-final node, more than once.
The \textit{length of a cycle} is equal to the number of edges in that cycle.
A set of node-disjoint cycles such that the sum of the cycle lengths is equal 
to $\ell$ is called a \textit{cycle family} of length $\ell$. We let
$\mc C_\ell(\mc G)$ denote the set of all $\ell$-long  
cycle families of $\mc G.$
See Fig.~\ref{fig:cycleFamilies} for illustration.
Since we are concerned with linear subspaces obtained by forcing certain 
entries of the matrices in $\real^{n \times n}$ to be zero, we will use the 
structural approach to system theory~\cite{KR-KJR:98}.
Given $\mc G$, we let $\mc A_{\mc G} = \{ A \in \real^{n \times n} : A \text{ 
is consistent with $\mc G$}\}$ be the vector space of all matrices 
consistent with $\mc G.$
Let $a \in \real^{\vert \mc E \vert},$ we denote by $\bA_{\mc G}(a)$ the element 
of $\mc A_{\mc G}$ parametrized by $a.$

\begin{figure}[t]
\centering \subfigure[]{\includegraphics[width=.336\columnwidth]{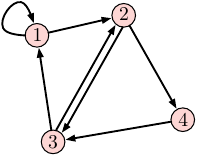}} \hspace{1cm}
\subfigure[]{\includegraphics[width=.32\columnwidth]{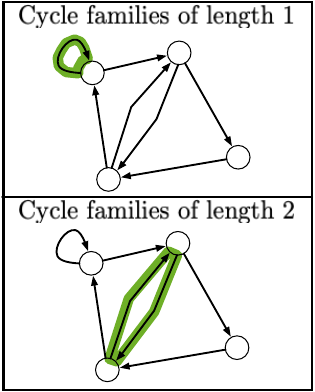}}\\
\subfigure[]{\includegraphics[width=.8\columnwidth]{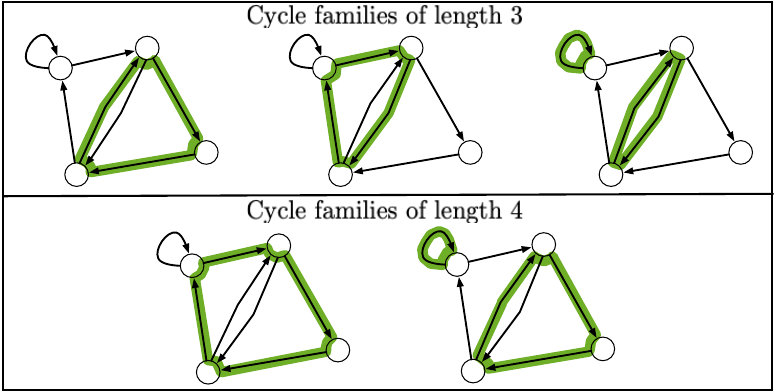}}
\caption{(a) An example of digraph $\mc G.$ (b)-(c) Illustration of all  cycle 
families of $\mc G,$ organized by length (a cycle family of length $\ell$ is a 
set of node-disjoint cycles such that the total number of edges is equal to 
$\ell$).}
\label{fig:cycleFamilies}
\vspace{-.5cm}
\end{figure}

{\bf \color{mblue} Projections and linear subspaces.}
$x, y \in \real^n$ are orthogonal 
if $x^\tsp  y = 0$; the \textit{orthogonal complement} (or \textit{orthogonal 
subspace}) of $\mc M \subset \real^n$ is 
$\mc M^\bot :=\setdef{x \in \real^n}{ x^\tsp y=0, \, \forall~y \in \mc M}$.
Given $\mc M, \mc N \subseteq \real^n$, $\mc W \subseteq \real^n$ is a 
\textit{direct sum} of $\mc M$ and $\mc N$ (denoted 
$\mc W = \mc M \oplus \mc N$) if 
$\mc M \cap \mc N = \{ 0 \}$,  and 
$\mc M + \mc N = \setdef{ u + v}{u \in \mc M, v \in \mc N} = \mc W$.
Subspaces $\mc M, \mc N \subset \real^n$ are \textit{complementary} if 
$\mc M \oplus \mc N = \real^n$. 
Matrix $\Pi \in \real^{n \times n}$ is called a \textit{projection} if 
$\Pi^2=\Pi$. 
Given complementary subspaces $\mc M, \mc N \subset \real^n$, 
for any $z \in \real^n$ there exists a unique decomposition $z=x+y$, where 
$x\in \mc M$, $y \in \mc N$.
The transformation $\Pi_{\mc M, \mc N}$, defined by 
$\Pi_{\mc M, \mc N}z := x$, is called \textit{projection onto $\mc M$ along 
$\mc N$}; $\Pi_{\mc N, \mc M},$ defined by 
$\Pi_{\mc N, \mc M}z := y,$ is called \textit{projection onto $\mc N$ along 
$\mc M$}; $x$ is the {projection of $z$ onto $\mc M$ along $\mc N$}, and 
$y$ is the {projection of $z$ onto $\mc N$ along $\mc M$}.
The projection $\Pi_{\mc M,\mc M^\bot}$ onto $\mc M$ along $\mc M^\bot$ is 
called \textit{orthogonal projection onto $\mc M$}.
Because $\mc M$ uniquely determines $\mc M^\bot$, we will denote $\Pi_{\mc M, \mc M^\bot}$ by $\Pi_{\mc M}$. 
Projections that are not orthogonal are called \textit{oblique 
projections}.

\begin{lemma}{\bf\textit{(See~\cite[Thm. 2.11 and Thm. 2.31]{AG:03})}}
\label{lem:diagonalizableProjection}
If $\Pi \in \real^{n \times n}$, $\rank \Pi = k$, is a projection, there 
exists $T \in \real^{n \times n}:$
\begin{align*}
\Pi = T \begin{bmatrix}
I_k & 0 \\ 0 & 0
\end{bmatrix}
T^\inv.
\end{align*}
Moreover, if $\Pi$ is an orthogonal projection, then $T$ can be chosen to be an 
orthogonal matrix, i.e., $T T^\tsp = I$.
\QEDB
\end{lemma}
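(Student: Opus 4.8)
The argument is elementary linear algebra built entirely around the idempotency $\Pi^2=\Pi$. The first step is to show that $\im(\Pi)$ and $\ker(\Pi)$ are complementary subspaces of $\real^n$. Indeed, every $z\in\real^n$ decomposes as $z=\Pi z+(z-\Pi z)$ with $\Pi z\in\im(\Pi)$ and $z-\Pi z\in\ker(\Pi)$, because $\Pi(z-\Pi z)=\Pi z-\Pi^2 z=0$; and if $v\in\im(\Pi)\cap\ker(\Pi)$, writing $v=\Pi w$ gives $v=\Pi w=\Pi^2 w=\Pi v=0$. Hence $\real^n=\im(\Pi)\oplus\ker(\Pi)$. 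The same computation shows that $\Pi$ restricts to the identity on $\im(\Pi)$: if $v=\Pi w$, then $\Pi v=\Pi^2 w=\Pi w=v$.

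Next, since $\rank{\Pi}=k$ we have $\dim\im(\Pi)=k$ and, by the direct-sum decomposition, $\dim\ker(\Pi)=n-k$. I would pick a basis $t_1,\dots,t_k$ of $\im(\Pi)$ and a basis $t_{k+1},\dots,t_n$ of $\ker(\Pi)$, and form $T=[\,t_1\ \cdots\ t_n\,]$, which is invertible because its columns constitute a basis of $\real^n$. By the previous paragraph, $\Pi t_i=t_i$ for $i\le k$ and $\Pi t_i=0$ for $i>k$, i.e., $\Pi T=T\,\diag(I_k,0)$; right-multiplying by $T^\inv$ yields the stated identity. (Equivalently, one may observe that $\Pi$ is annihilated by $x(x-1)$, a polynomial with simple roots, hence diagonalizable with spectrum contained in $\{0,1\}$, the eigenspaces for $1$ and $0$ being exactly $\im(\Pi)$ and $\ker(\Pi)$.)

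For the second claim, the hypothesis is that $\Pi=\Pi_{\mc M,\mc M^\bot}$ with $\mc M=\im(\Pi)$, so that $\ker(\Pi)=\mc M^\bot=(\im\Pi)^\bot$; equivalently, $\Pi=\Pi^\tsp$. I would then rerun the construction, choosing $t_1,\dots,t_k$ to be an \emph{orthonormal} basis of $\im(\Pi)$ and $t_{k+1},\dots,t_n$ an orthonormal basis of $(\im\Pi)^\bot$. Since these two subspaces are mutually orthogonal and together span $\real^n$, the concatenated list $t_1,\dots,t_n$ is an orthonormal basis of $\real^n$, so the resulting $T$ satisfies $T^\tsp T=I$, while the identity $\Pi T=T\,\diag(I_k,0)$ is unchanged.

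The computations are routine, and the only point requiring a little care is precisely this last upgrade: one must use that the defining complement of an orthogonal projection is $(\im\Pi)^\bot$, rather than an arbitrary algebraic complement, since this is exactly what allows the basis of $\ker(\Pi)$ to be taken orthogonal to $\im(\Pi)$ and hence makes $T$ orthogonal instead of merely invertible.
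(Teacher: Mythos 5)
Your proof is correct and complete: the decomposition $\real^n=\im(\Pi)\oplus\ker(\Pi)$ from idempotency, the eigenbasis construction giving $\Pi T=T\,\diag(I_k,0)$, and the upgrade to an orthonormal basis when $\ker(\Pi)=(\im\Pi)^\bot$ are all standard and carried out without gaps. Note that the paper does not prove this lemma at all --- it is imported directly from the cited reference --- so there is no in-paper argument to compare against; your write-up is simply the textbook proof of that cited result.
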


\begin{lemma}{\bf\textit{(See~\cite[Thm. 2.26]{AG:03})
}}
\label{lem:projectionComputation}
Let $\mc M, \mc N$ be complementary subspaces and let the columns of 
$M \in \real^{n \times k}$ and $N \in \real^{n \times k}$ form a basis for 
$\mc M$ and $\mc N^\bot$, respectively. Then 
$\Pi_{\mc M, \mc N} = M(N^\tsp M)^\inv N^\tsp$.
\QEDB
\end{lemma}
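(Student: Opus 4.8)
The plan is to set $P := M(N^\tsp M)^\inv N^\tsp$ and verify directly that $P$ coincides with the projection onto $\mc M$ along $\mc N$, by checking three things: that $P$ is idempotent, that $\im P = \mc M$ with $P$ acting as the identity on $\mc M$, and that $\ker P = \mc N$. Once these are established, the uniqueness of the decomposition $z = x + y$ with $x \in \mc M$, $y \in \mc N$ guaranteed by $\mc M \oplus \mc N = \real^n$ forces $Pz = x = \Pi_{\mc M,\mc N} z$ for every $z$, which is the claim.

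The first step, which I expect to be the main obstacle, is to show that the $k \times k$ matrix $N^\tsp M$ is invertible — this is needed even to make sense of the asserted formula. Suppose $N^\tsp M v = 0$ for some $v \in \real^k$. Then $Mv \in \mc M$, and $Mv$ is orthogonal to every column of $N$; since the columns of $N$ span $\mc N^\bot$ and $(\mc N^\bot)^\bot = \mc N$ in $\real^n$, this gives $Mv \in \mc N$. Complementarity yields $Mv \in \mc M \cap \mc N = \{0\}$, and since the columns of $M$ are linearly independent, $v = 0$. Hence $N^\tsp M$ has trivial kernel and is invertible. This is the one place where all the hypotheses are used at once (linear independence of the columns of $M$ and of $N$, together with $\mc M \oplus \mc N = \real^n$); the rest is bookkeeping.

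Next I would compute $P^2 = M(N^\tsp M)^\inv (N^\tsp M)(N^\tsp M)^\inv N^\tsp = M(N^\tsp M)^\inv N^\tsp = P$, so $P$ is a projection. For the image: clearly $\im P \subseteq \im M = \mc M$, and for any $x = Mw \in \mc M$ one gets $Px = M(N^\tsp M)^\inv N^\tsp M w = Mw = x$, so $P$ restricts to the identity on $\mc M$ and therefore $\im P = \mc M$. For the kernel: since $M$ has full column rank and $(N^\tsp M)^\inv$ is invertible, $Px = 0$ holds if and only if $N^\tsp x = 0$, i.e.\ if and only if $x \perp \im N = \mc N^\bot$, i.e.\ if and only if $x \in \mc N$; hence $\ker P = \mc N$.

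Finally, for arbitrary $z \in \real^n$ write $z = x + y$ with $x \in \mc M$ and $y \in \mc N$, the decomposition supplied by $\mc M \oplus \mc N = \real^n$. Then $Pz = Px + Py = x + 0 = x$, which by definition is $\Pi_{\mc M,\mc N} z$. Since $z$ was arbitrary, $P = \Pi_{\mc M,\mc N} = M(N^\tsp M)^\inv N^\tsp$, as claimed.
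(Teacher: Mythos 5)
Your proof is correct and complete. Note that the paper itself gives no proof of this lemma --- it is stated as a known result imported from \cite[Thm.~2.26]{AG:03} --- so there is no in-paper argument to compare against. Your direct verification is the standard one: the invertibility of $N^\tsp M$ (correctly derived from $\mc M \cap \mc N = \{0\}$ together with the full column rank of $M$ and the identity $(\mc N^\bot)^\bot = \mc N$) is indeed the only nontrivial step, and the subsequent checks that $P$ is idempotent, acts as the identity on $\mc M$, and has kernel exactly $\mc N$ pin down $P$ as $\Pi_{\mc M,\mc N}$ via the uniqueness of the direct-sum decomposition, exactly as the paper's definition of the projection requires.
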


We recall the following known properties~\cite[Thm.~1.60]{AG:03}:
\begin{align*}
\im(M^\tsp) &= \im(M^\pinv) = \im(M^\pinv M) = \im(M^\tsp M),\\
\ker(M) &= \im(M^\tsp)^\bot=\ker (M^\pinv M) = \im(I-M^\pinv M). 
\end{align*}
From these properties and Lemma~\ref{lem:projectionComputation}, if 
$M \in \real^{m \times n}$, then $\Pi_{\im(M)} = M M^\pinv$ and 
$\Pi_{\ker(M)} = I - M^\pinv M$, where $M^\pinv\in \real^{n \times m}$ is the Moore-Penrose inverse 
of $M$.

\section{Problem Setting}
\label{sec:3}

\subsection{Problem formulation}
Consider a set of agents $\mc V = \until n,$ each characterized by a state 
$x_i \in \real, i \in \mc V,$ and communicating through a network whose 
topology is described by a digraph $\mc G = (\mc V, \mc E)$.
We study a model where each agent exchanges its state with its 
neighbors and updates it as:
\begin{align}
\label{eq:systemDistributed}
\dot x_i &= a_{ii} x_i + \sum_{j \in \mc N_i} a_{ij} x_j, &
\forall \,\, i \in \mc V,
\end{align}
where $a_{ij} \in \real$, $(i,j) \in \mc E$, is a weighting factor, and 
$\mc N_i = \setdef{j \in \mc V\setminus \{i\}}{(i,j) \in \mc E}$ is the set of 
in-neighbors of $i.$
Setting $A= [a_{ij}], a_{ij}=0$ if $(i,j) \not \in \mc E$, and 
$x = (x_1, \dots, x_n)$, in vector form the network dynamics are:
\begin{align}
\label{eq:systemCentralized}
\dot x = A x.
\end{align}

\noindent
We say that~\eqref{eq:systemCentralized} reaches an agreement if each state 
variable converges to an agent-dependent weighted sum of the initial conditions, 
as formalized next.
\begin{definition}{\bf \textit{($k$-dimensional agreement)}}
\label{def:kDimensAgree}
Let 
$W \in \real^{n \times n}$ be such that $\rank W = k \in \naturalpos$.
We say that the update~\eqref{eq:systemCentralized} \textit{globally 
asymptotically reaches a  $k$-dimensional agreement on $W$} if, 
for any $x(0) \in \real^n$, 
\begin{align}
\label{eq:kDimensAgree}
\lim_{t \rightarrow \infty} x(t) = W x(0).
\end{align}
\QEDB\end{definition}

We discuss in Section~\ref{subsec:applications} some application scenarios for 
this notion.
Notice that agreement does not require that the agents' states coincide 
at convergence: in fact, $\lim_{t\rightarrow \infty}\norm{x_i(t)- x_j(t)}=0$ 
only holds if all rows of $W$ are identical.  
We discuss in Remark~\ref{rem:interpretationOfConsensus} how agreement 
generalizes the well-studied notion of consensus.

\begin{remark}{\bf \textit{(Relationship with consensus problems)}}
\label{rem:interpretationOfConsensus}
In the special case $k=1$, $W$ can be written as $W= v w^\tsp$ for some 
$v, w \in \real^n$. In this case, we recover the \textit{scaled 
consensus} problem~\cite{SR:15}. When, in addition, $v=\one$ and 
$w^\tsp \one=1$, we recover the \textit{consensus} 
problem, see, e.g.,~\cite{RO-RM:04}. When, $v=\one$ and $w = \frac{1}{n} \one$, 
our problem simplifies to \textit{average consensus}~\cite[Sec.~2]{RO-RM:04}. 
Notice that all state variables converge to the same quantity 
only when $k=1$ and $v = \one.$
\QEDB
\end{remark}

In line with the consensus literature, the following distinction is important.

\begin{definition}{\bf \textit{(Agreement on some weights vs on arbitrary weights)}}
\label{def:arbitrary_vs_some_weights}
Let $k \in \naturalpos$.
\begin{itemize}
\item[\textit{(i)}] The set of agents is said to be \textit{globally 
$k$-agreement reachable on some weights} if there exists $W \in \real^{n \times n},$  $\rank{W}=k,$ and $A\in \real^{n \times n}$  such 
that~\eqref{eq:systemCentralized} globally asymptotically reaches a 
$k$-dimensional agreement on $W.$
\item[\textit{(ii)}] The set of agents is said to be \textit{globally $k$-agreement 
reachable on arbitrary weights} if, for any $W\in\real^{n \times n}$  with
$\rank{W}=k,$ there exists $A$ such that~\eqref{eq:systemCentralized} globally 
asymptotically reaches a $k$-dimensional agreement on $W$.

\QEDB
\end{itemize}
\end{definition}


Extending Remark~\ref{rem:interpretationOfConsensus}, agreement reachability on 
some weights is a generalization of global consensus 
reachability~\cite{RB-VS:03}, while agreement reachability on arbitrary weights 
generalizes global \textit{average} consensus reachability~\cite{RO-JF-RM:07}. 
Importantly, whether a group of agents is agreement reachable depends on two 
main factors: (i) the choice of $k,$ and (ii) the connectivity of $\mc G$. 
We illustrate this in the following example.

\begin{example}
{\bf \textit{(Agreement on arbitrary vs on some 
weights)}}\label{ex:arbitrary_vs_some_weights}
Consider a set of agents whose communication graph is a set of isolated nodes 
with self loops (i.e., $\mc V = \until n$ and $\mc E = \{(i,i)\}_{i \in \mc V}$). 
The set of protocols~\eqref{eq:systemCentralized} compatible with this graph 
is characterized by a diagonal matrix 
$A = \diag (a_1, \dots , a_n).$ Notice that 
$\lim_{t \rightarrow \infty} x(t)= \lim_{t \rightarrow \infty}e^{At} x(0)$ 
exists if and only if $\max \{a_i\}_{i \in \mc V} \leq 0.$ 
When the latter condition holds,  
$\lim_{t \rightarrow \infty} e^{At} = \diag(d_1, \dots, d_n),$ where $d_i=0$ if 
$a_i<0$ and $d_i=1$ if $a_i=0.$ 
Hence, the agents are globally agreement reachable on some weights (precisely, 
any agreement matrix $W$ has the form $W=\diag(d_1, \dots, d_n)$). However, the 
agents are not globally  agreement reachable on arbitrary weights (in fact, 
agreement cannot be reached, for example, on any non-diagonal $W$).~
\QEDB\end{example}

With this motivation, in this work, we study the following two problems. 

\begin{problem}{\bf \textit{(Construction of communication graphs for agreement)}}
\label{probl:graph_determination}
Determine the largest class of communication graphs 
that guarantees that the set of agents is globally $k$-agreement reachable on 
arbitrary weights. 
\QEDB\end{problem}

\begin{problem}{\bf \textit{(Agreement protocol design)}}
\label{probl:matrix_determination}
Let $\mc G$ be a communication graph such that the set of agents is 
globally $k$-agreement reachable on arbitrary weights (see 
Problem~\ref{probl:graph_determination}) and let $W \in \real^{n \times n}, \rank W = k.$
Determine $A$, consistent with $\mc G,$ such that~\eqref{eq:kDimensAgree} holds 
with optimal rate of convergence.~
\QEDB\end{problem}

Problem~\ref{probl:graph_determination} is a feasibility problem: it asks to 
determine the class of graphs that support agreement protocols on 
arbitrary weights. 
Problem~\ref{probl:matrix_determination}, instead, is a protocol design 
problem. 
We conclude this section by discussing an important technical challenge related 
to designing agreement protocols.

\begin{remark}{\bf \textit{(New technical challenges with respect to consensus)}}
\label{rem:technical_challenges_wrt_consensus}
Several techniques have been proposed in the literature to design consensus 
protocols, including Laplacian-based methods~\cite{RO-RM:04}, 
distributed weight design~\cite{VS-GH-GM:14}, and centralized weight 
design~\cite{LX-SB:04}. Most of these methods rely on the assumption that the  
protocol $A$ is a non-negative matrix and on the Perron-Frobenius 
Theorem~\cite{FB:19} as the main tool for the analysis. 
Unfortunately, the Perron-Frobenius theorem can no longer be used for agreement 
problems for two reasons: (i) the entries of $W$ are possibly negative scalars 
and thus $A$ can no longer be restricted to being a non-negative matrix, and 
(ii) $A$ can no longer be restricted to being a matrix with a single dominant 
eigenvalue (as we prove in Lemma~\ref{lem:spectralPropertiesA}, shortly below). 
Hence, the agreement problem presents new theoretical challenges with respect 
to the existing literature.
\QEDB
\end{remark}

\subsection{Illustrative applications}
\label{subsec:applications}

In this section, we present some illustrative applications where 
the agreement problem emerges in practice.

{\bf \color{mblue} Distributed parallel computation of multiple functions.}
Many numerical computational tasks amount to evaluating a certain function at a 
given point~\cite{DB-JT:15}: examples include computing scalar addition, inner 
products, matrix addition and multiplication, matrix powers, finding the least 
prime factor,  etc.~\cite[Sec. 1.2.3]{DB-JT:15}.
Formally, given a function $\map{f}{\real^n}{\real}$ and a point 
$(\hat x_1, \dots, \hat x_n )$, the objective is to evaluate 
$f(\hat x_1, \dots, \hat x_n ).$ The classical approach to this problem amounts 
to designing an iterative algorithm $\dot x = g(x)$ such that 
$\lim_{t \rightarrow \infty} x(t) = f(\hat x_1, \dots, \hat x_n ).$
When such a computing task has a distributed nature~\cite{SM-PR:92}, each 
quantity $\hat x_i$ is known only by agent $i,$ and it is of interest to 
maintain $\hat x_i$ private from the rest of the network. 
In these cases, the distributed computation literature~\cite{SM-PR:92} has 
proposed the update rule $\dot x_i = g_i(x),$ to be designed such that
$\lim_{t \rightarrow \infty} x_i(t) = f(\hat x_1, \dots, \hat x_n ), \forall i.$

Consider now the problem of evaluating, in a distributed fashion, 
\textit{several} functions at a common point. 
Formally, given $\map{f_1, \dots, f_n}{\real^n}{\real}$ and 
$(\hat x_1, \dots, \hat x_n ),$ the objective is to design 
distributed protocols of the form $\dot x_i = g_i(x)$ such that:
\begin{align} \label{eq:distributed_computation}
& \lim_{t \rightarrow \infty} x_i(t) = f_i(\hat x_1, \dots, \hat x_n), &&
\forall i.
\end{align}
It is immediate to see that, when $f_i$ are linear, this is an instance of 
the agreement problem~\eqref{eq:kDimensAgree}.

{\bf \color{mblue} Constrained Kalman filtering.}
Kalman filters are widely used to estimate the states of a dynamic 
system. In constructing Kalman filters, it is often necessary to account for 
state-constrained dynamic systems; examples include camera tracking, fault 
diagnosis, chemical processes, vision-based systems, and biomedical 
systems~\cite{DS:10}.
Formally, given a dynamic system of the form $\dot x = F x + Bu + w,$ $y=Cx+e,$
subject to the state constraint $Dx=0,$ (see~\cite[eq. (10)]{DS:10}), 
the objective is that of constructing an optimal estimate $\hat x^c$ of $x$ 
given past measurements $\{y(\tau), \tau \leq t\}.$
Denoting by $\hat x^u$ the state estimate constructed using an unconstrained 
Kalman filter, a common approach to tackle the constrained problem consists of 
projecting $\hat x^u$ onto the constraint space~\cite[Sec.~2.3]{DS:10}:
\begin{align*}
\hat x^c = \arg \min_{x} ~~ \norm{x - \hat x^u}^2, &&
\text{subject to:}~  Dx = 0.
\end{align*}
The solution to this problem is 
$\hat x^c = (I- D^\tsp (D D^\tsp)^\inv D )\hat x^u;$
notice that this is an oblique projection of the vector $\hat x^u.$
To speed up the calculation, it is often of interest to parallelize the 
computation of $\hat x^c$ across a group of distributed processors. It is 
then immediate to see that the agreement  problem~\eqref{eq:kDimensAgree} 
provides a natural framework to address this problem.

\subsection{Complexity considerations}
We now illustrate how the use of classical coordination algorithms 
to solve~\eqref{eq:distributed_computation} leads to a suboptimal use of 
resources. 
Assume that functions $f_i(\cdot)$ in~\eqref{eq:distributed_computation}  
are linear, namely, 
$f_i(x) = w_i^\tsp x$, with $w_i \in \real^n$, $w_i^\tsp \one=1$, 
and that  $k$ vectors of $\{w_1, \dots , w_n\}$ are linearly independent.
It is natural to consider two approaches to solve this problem.

{\bf \color{mblue} Approach 1.}
This approach consists of running $k$ independent consensus 
algorithms~\cite{RO-JF-RM:07} in parallel, as outlined next. 
Let each agent $i$ duplicate its state $k$ times: 
$\{ x_i^{(d)} \in \real\}_{d \in \until k},$ and update the states using:
\begin{align}\label{eq:laplacian_consensus}
\dot x_i^{(d)} = \sum_{j} a_{ij}^{(d)} (x_j^{(d)}- x_i^{(d)}), && 
x_i^{(d)}(0) = \hat x_i.
\end{align}
Letting $A^{(d)} = [a_{ij}^{(d)}]$ and choosing $A^{(d)}$ such that 
$w_d^\tsp A^{(d)}=0,$ \eqref{eq:laplacian_consensus} is a Laplacian-based 
consensus algorithm~\cite[Thm. 1]{RO-JF-RM:07}; as such, 
$
\lim_{t \rightarrow \infty} x^{(d)}_i(t) =  w_d^\tsp \hat x,$
provided that $\mc G$ is strongly connected.
In words, the $d$-th state replica of each agent 
satisfies~\eqref{eq:distributed_computation}.
Unfortunately,  the spatial and communication 
complexities of this approach do not scale well with $n$ (see 
Fig.~\ref{fig:communicationComplexity}): each agent maintains $k$ replica 
state variables and, at every time step, it transmits these $k$ variables to all 
its neighbors. Thus, the per-agent spatial 
complexity is $\mathcal{O}(k)$ (since each agent maintains $k$ state copies), 
and the per-agent communication complexity\footnote{$\text{deg}(\mc G)$ denotes 
the largest among all in- and out- node degrees 
in $\mc G$.} is $\mathcal{O}(k \cdot \text{deg}(\mc G))$ and thus  
$\mathcal{O}(n \cdot \text{deg}(\mc G))$ when $k$ grows with $n$.

{\bf \color{mblue} Approach 2.}
Consider the use of protocol~\eqref{eq:systemCentralized}, designed to 
achieve~\eqref{eq:kDimensAgree} with $W=[w_1, \dots , w_n]^\tsp$. 
Deriving techniques to design such a protocol is the focus of this work, and 
will be presented shortly below (see Section~\ref{sec:5}). 
For such a protocol, the per-agent spatial complexity is $\mathcal{O}(1),$ 
since each agent maintains a single scalar state variable and the 
communication complexity is $\mathcal{O}(\text{deg}(\mc G))$. 
A comparison of the communication volumes of the two approaches is illustrated 
in Fig.~\ref{fig:communicationComplexity}.
Notice the fundamental difference between the two approaches: in~{Approach 1}, 
one computes $k$ independent quantities by \textit{running $k$ distributed averaging algorithms} while, in~{Approach 2}, one computes the $k$ independent 
quantities by \textit{running a single distributed algorithm}.


\section{Characterization of the agreement space and  fundamental limitations}
\label{sec:4}
The focus of this section is to address Problem~\ref{probl:graph_determination}.

\subsection{Algebraic characterization of agreement space}

The following result is instrumental.

\begin{lemma}{\bf \textit{(Spectral properties of agreement protocols)}}
\label{lem:spectralPropertiesA}
A set of agents with communication graph $\mc G$ is globally $k$-agreement 
reachable on some weights if and only if there exists $A\in \real^{n\times n}$ such that:
\begin{align}
\label{eq:semiConvergentA}
A \in \mc A_{\mc G}, && \text{and} &&
A = T \begin{bmatrix}
\zero_{k, k} & \zero_{k,n-k} \\ \zero_{n-k,k} & B
\end{bmatrix}
T^\inv,
\end{align}
for some nonsingular
$T \in \real^{n \times n}$ and
$B \in \real^{(n -k) \times (n-k)}$ satisfying $\lmax{B}<0$. 

Conversely, a set of agents is globally $k$-agreement reachable on arbitrary 
weights if and only if for any nonsingular $T \in \real^{n \times n},$ there 
exists $A\in \real^{n \times n}$ such that~\eqref{eq:semiConvergentA} holds.
%
\QEDB\end{lemma}

\begin{proof}
\textit{(If)} When~\eqref{eq:semiConvergentA} holds, we have that:
\begin{align*}
\lim_{t\rightarrow \infty} x(t)
= \lim_{t\rightarrow \infty} e^{At} x(0)
= \underbrace{T \begin{bmatrix}
I_k & \zero_{} \\ \zero_{} & \zero
\end{bmatrix}
T^\inv }_{:=W} x(0) 
=Wx(0).
\end{align*}

\textit{(Only if)} From~\cite[Lemma 1.7]{FB:19}, if
$\lim_{t \rightarrow \infty} e^{At}$ exists, then
$\lmax A \leq 0$; moreover, if $\lambda$ is an eigenvalue of $A$ such that
$\Re(\lambda)=0$, then  $\lambda=0$ and its algebraic and geometric 
multiplicities coincide. 
It follows that $A$ must satisfy~\eqref{eq:semiConvergentA}.
%
\end{proof}

Lemma~\ref{lem:spectralPropertiesA} provides an algebraic characterization of 
agreement protocols through~\eqref{eq:semiConvergentA}.
Next, we characterize the class of weight matrices $W$ on which an agreement 
can be reached.

\begin{proposition}{\bf \textit{(Characterization of agreement space)}} 
\label{prop:characterizationAgreementSpace}
Let $x(t)$ denote the solution of~\eqref{eq:systemCentralized} with initial 
condition $x(0)$. If $\lim_{t \rightarrow \infty} x(t):= x_\infty$ exists, 
then there exist complementary subspaces 
$\mc M, \mc N \subset \real^n$ such that $x_\infty = \Pi_{\mc M, \mc N} x(0).$
Moreover, let $\{t_1, \dots, t_k\}$ denote the first $k$ columns of $T$ 
in~\eqref{eq:semiConvergentA} and 
$\{\tau_1^\tsp, \dots, \tau_k^\tsp\}$ denote the first $k$ rows of $T^\inv.$ 
Then, 
\begin{align}
\label{eq:expressionM_N}
\mc M &= \im(\{t_1, \dots, t_k\}), &
\mc N^\bot &= \im(\{\tau_1, \dots, \tau_k\}).
\end{align}
\QEDB\end{proposition}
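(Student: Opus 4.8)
The plan is to reduce the claim to the block decomposition furnished by Lemma~\ref{lem:spectralPropertiesA} and then recognize the limiting operator as an explicit oblique projection via Lemma~\ref{lem:projectionComputation}. First, because the state converges for every initial condition --- equivalently, because $\lim_{t\to\infty}e^{At}$ exists, which is the setting of Lemma~\ref{lem:spectralPropertiesA} --- the matrix $A$ admits the form~\eqref{eq:semiConvergentA}. The computation performed in the ``if'' part of the proof of that lemma then shows $x_\infty = \lim_{t\to\infty}e^{At}x(0) = W x(0)$, where
\[
W = T\begin{bmatrix} I_k & \zero \\ \zero & \zero \end{bmatrix}T^\inv .
\]
A one-line check gives $W^2 = W$ and $\rank{W}=k$, so $W$ is a rank-$k$ projection; it remains to produce the complementary pair $(\mc M,\mc N)$ with $W=\Pi_{\mc M,\mc N}$ and to match it with~\eqref{eq:expressionM_N}.

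The next step is block bookkeeping. Write $T = [\,T_1 \mid T_2\,]$ with $T_1 = [t_1,\dots,t_k] \in \real^{n\times k}$, and let $S \in \real^{k\times n}$ be the matrix whose rows are $\tau_1^\tsp, \dots, \tau_k^\tsp$, i.e.\ the first $k$ rows of $T^\inv$. Then the displayed expression becomes $W = T_1 S$, and inspecting the top-left $k\times k$ block of the identity $T^\inv T = I_n$ yields the biorthogonality relation $S T_1 = I_k$; this single relation drives the remainder of the argument.

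I would then set $\mc M := \im(T_1) = \im(\{t_1,\dots,t_k\})$, $\mc N^\bot := \im(S^\tsp) = \im(\{\tau_1,\dots,\tau_k\})$, and $\mc N := \im(S^\tsp)^\bot$, so that~\eqref{eq:expressionM_N} holds by construction and the only things to verify are that $\mc M$ and $\mc N$ are complementary and that $W = \Pi_{\mc M,\mc N}$. From $S T_1 = I_k$ one gets $\im W = \im(T_1 S) = \im(T_1) = \mc M$ (the nontrivial inclusion because $T_1 y = (T_1 S)(T_1 y)$ for every $y \in \real^k$), while injectivity of $T_1$ gives $\ker W = \ker(T_1 S) = \ker(S) = \im(S^\tsp)^\bot = \mc N$. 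Since $W$ is a projection, $\real^n = \im W \oplus \ker W = \mc M \oplus \mc N$, so $\mc M$ and $\mc N$ are complementary. Finally, Lemma~\ref{lem:projectionComputation} applied with $M = T_1$ (a basis of $\mc M$) and $N = S^\tsp$ (a basis of $\mc N^\bot$), together with $N^\tsp M = S T_1 = I_k$, gives $\Pi_{\mc M,\mc N} = M(N^\tsp M)^\inv N^\tsp = T_1 S = W$; hence $x_\infty = W x(0) = \Pi_{\mc M,\mc N}x(0)$, as claimed.

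The argument is essentially accounting, so I do not anticipate a genuine obstacle. The spot that demands care is keeping the duality straight --- it is $\mc N^\bot$, not $\mc N$ itself, that is spanned by the first $k$ rows of $T^\inv$ --- and extracting $S T_1 = I_k$ from $T^\inv T = I_n$; this identity is simultaneously responsible for $\im W = \mc M$, for $\ker W = \mc N$, and for the invertibility of $N^\tsp M$ needed in Lemma~\ref{lem:projectionComputation}. A minor, purely expository matter is to state the hypothesis so that Lemma~\ref{lem:spectralPropertiesA} applies, namely convergence from every initial condition (equivalently, existence of $\lim_{t\to\infty}e^{At}$).
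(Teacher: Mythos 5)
Your proposal is correct and follows essentially the same route as the paper's own proof: apply Lemma~\ref{lem:spectralPropertiesA} to obtain the block form~\eqref{eq:semiConvergentA}, recognize the limiting operator $W=T_1 S$ as a rank-$k$ projection, and invoke Lemma~\ref{lem:projectionComputation} to identify $\mc M$ and $\mc N$ with~\eqref{eq:expressionM_N}. The only difference is one of detail --- you make explicit the biorthogonality relation $S T_1 = I_k$ and the resulting image/kernel identifications, which the paper leaves implicit --- and you correctly flag the same implicit strengthening of the hypothesis (existence of $\lim_{t\to\infty}e^{At}$) that the paper itself uses.
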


\begin{proof}
Assume that  $\lim_{t \rightarrow \infty} x(t)$ exists; by application of 
Lemma~\ref{lem:spectralPropertiesA}, $A$  reads as 
in~\eqref{eq:semiConvergentA}, and thus:
\begin{align*}
\lim_{t\rightarrow \infty} x(t)
= \lim_{t \rightarrow \infty} e^{A t} x(0) = 
T \begin{bmatrix} I_k & \zero \\ \zero & \zero \end{bmatrix} T^\inv 
x(0).
\end{align*}
Notice that $W^2=W,$ and thus $W$ is a projection matrix; by application 
of~\cite[Thm 2.6]{AG:03}, we conclude that there exists subspaces 
$\mc M, \mc N,$ (given by $\mc M = \im(W)$ and $\mc N = \ker(W))$ such that 
$W = \Pi_{\mc M, \mc N}.$

To prove the second part of the claim, let $T$ and $T^\inv$ be: 
\begin{align*}
T &=
\begin{bmatrix}
t_1 & \cdots & t_n 
\end{bmatrix},  &
T^\inv &=
\begin{bmatrix}
\tau_1 & \cdots & \tau_n 
\end{bmatrix}^\tsp, &
\end{align*}
where $t_i, \tau_i\in \real^n, i \in \until n.$ Then, 
\begin{align*}
\lim_{t\rightarrow \infty} x(t)
&= 
\begin{bmatrix} t_1 & \cdots & t_k \end{bmatrix}
\begin{bmatrix} \tau_1 & \cdots & \tau_k \end{bmatrix}^\tsp 
x(0).
\end{align*}
From Lemma~\ref{lem:projectionComputation}, it follows that the subspaces 
$\mc M$ and $\mc N$ are  given by~\eqref{eq:expressionM_N}.
%
%
\end{proof}



Proposition~\ref{prop:characterizationAgreementSpace} is a 
fundamental limitation-type result: it shows that if $\dot x = Ax$ converges, 
then the asymptotic value is some oblique projection of the initial conditions 
$x(0)$. 
In turn, this implies that linear protocols can agree only on weight matrices
$W$ that are oblique projections.


\begin{remark}{\bf \textit{(Geometric reinterpretation of consensus 
algorithms)}}
In the case of consensus, the group of agents is known to converge to 
$\one w^\tsp x(0)$, where $w$ is the left eigenvector of $A$  that 
satisfies $w^\tsp \one =1$ (see Remark~\ref{rem:interpretationOfConsensus}).
Proposition~\ref{prop:characterizationAgreementSpace} allows us to give a 
geometric interpretation of the consensus value: 
$\one w^\tsp x(0) = \Pi_{\mc M, \mc N} x(0)$ is the oblique projection of 
$x(0)$ onto $\mc M = \im(\one)$ along $\mc N = \im(w)^\bot.$
In the case of average consensus, the convergence value
(given by $\frac{1}{n}\one \one^\tsp x(0)$) is the orthogonal projection of 
$x(0)$ onto $\mc M = \im(\one)$.~
\QEDB\end{remark}

Motivated by the conclusions in 
Proposition~\ref{prop:characterizationAgreementSpace}, in what follows we make
the following assumption.

\begin{assumption}{\bf \textit{(Matrix of weights is a projection)}}
\label{as:projectionWeights}
The matrix of weights $W$ is a projection. Namely, $W \in \real^{n \times n},$
$W^2=W,$ and $\rank W =k$. 
\QEDB\end{assumption}

Notice that, given two complementary subspaces $\mc M, \mc N,$  a matrix 
$W$ that satisfies Assumption~\ref{as:projectionWeights} can be computed as:
\begin{align*}
W = M(N^\tsp M)^\inv N^\tsp,
\end{align*}
where 
$M \in \real^{n \times k}$ and $N \in \real^{n \times k}$ form a basis for 
$\mc M$ and $\mc N^\bot$, respectively (see 
Lemma~\ref{lem:projectionComputation}).
Notice that the agreement space corresponding to this choice of $W$ is 
$\Pi_{\mc M, \mc N}$.

We are now ready to prove the following.

\begin{proposition}{\bf \textit{(Existence of agreement algorithms over complete digraphs)}} 
\label{prop:existenceWeights}
Let $\mc M, \mc N \subset \real^n$ be complementary subspaces and
$\mc G$ the complete graph. 
There exists $A \in \real^{n \times n}$, consistent with 
$\mc G,$ such that the 
iterates~\eqref{eq:systemCentralized} satisfy 
$\lim_{t \rightarrow \infty} x(t) = \Pi_{\mc M, \mc N} x(0)$.
\QEDB\end{proposition}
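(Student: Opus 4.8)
The plan is to construct $A$ explicitly by prescribing its eigenstructure according to Lemma~\ref{lem:spectralPropertiesA} and then verifying that, for the complete graph, such a matrix can always be realized with the required sparsity (which is no constraint at all since every off-diagonal entry is allowed). Concretely, I would first pick a basis. Let the columns of $M \in \real^{n \times k}$ span $\mc M$ and the columns of $N \in \real^{n \times k}$ span $\mc N^\bot$; since $\mc M$ and $\mc N$ are complementary, $N^\tsp M$ is invertible, and by Lemma~\ref{lem:projectionComputation} the target projection is $W = \Pi_{\mc M,\mc N} = M(N^\tsp M)^\inv N^\tsp$. The goal is to produce $A$ with $\lim_{t\to\infty} e^{At} = W$, which by Lemma~\ref{lem:spectralPropertiesA} (the ``only if'' analysis) amounts to: $\ker A = \mc M$, the restriction of $A$ to a complementary invariant subspace is Hurwitz, and the zero eigenvalue is semisimple with multiplicity $k$.

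The key steps are as follows. First, complete $M$ to an invertible matrix $T = [\,M \mid M'\,] \in \real^{n\times n}$ whose last $n-k$ columns $M'$ span $\mc N$ (this is possible precisely because $\mc M \oplus \mc N = \real^n$). Second, pick any Hurwitz matrix $B \in \real^{(n-k)\times(n-k)}$ — for instance $B = -I_{n-k}$ — and define
\begin{align*}
A = T \begin{bmatrix} \zero_{k,k} & \zero_{k,n-k} \\ \zero_{n-k,k} & B \end{bmatrix} T^\inv.
\end{align*}
By construction $A$ has the form~\eqref{eq:semiConvergentA}, so by the ``if'' direction of Lemma~\ref{lem:spectralPropertiesA} the dynamics~\eqref{eq:systemCentralized} globally asymptotically reach a $k$-dimensional agreement on $W' := T \left[\begin{smallmatrix} I_k & 0 \\ 0 & 0\end{smallmatrix}\right] T^\inv$. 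Third, I must check that $W'$ equals the prescribed $\Pi_{\mc M,\mc N}$: the matrix $W'$ is the projection onto the span of the first $k$ columns of $T$ (namely $\mc M$) along the span of the last $n-k$ columns of $T$ (namely $\mc N$), which is exactly $\Pi_{\mc M,\mc N}$ by the uniqueness of the decomposition $z = x+y$, $x\in\mc M$, $y\in\mc N$. Fourth, I observe that consistency with the complete graph $\mc G$ is automatic: $\mc G$ imposes no zero pattern on off-diagonal entries, and any required zeros among off-diagonal entries of $A$ are still ``consistent'' in the sense of the paper (a zero where a $*$ is allowed), so $A$ — whatever its pattern — lies in the closure realizations compatible with $\mc G$; if one wants a strictly nonzero pattern, a generic perturbation of $B$ or of the completion $M'$ keeps all off-diagonal entries nonzero while preserving the spectral structure.

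The main obstacle — really the only subtle point — is the last observation about the complete graph and what ``consistent with $\mc G$'' demands. If the definition requires exact equality of zero/nonzero patterns, one needs to argue that the freedom in choosing $B$ and the completion of the basis suffices to make every off-diagonal entry of $A$ nonzero while keeping $\sigma(A) = \{0\}\cup\sigma(B)$ with $0$ semisimple of multiplicity $k$; this follows because these spectral constraints are preserved under small perturbations of $B$ (eigenvalue continuity plus the openness of the Hurwitz condition) and of the basis vectors spanning $\mc M,\mc N$ (which only needs $N^\tsp M$ to stay invertible), while the set of matrices with some vanishing off-diagonal entry is a finite union of hyperplanes, hence nowhere dense — so a generic choice avoids it. Everything else is a direct assembly of Lemmas~\ref{lem:spectralPropertiesA} and~\ref{lem:projectionComputation} with the elementary fact that complementary subspaces admit a joint basis.
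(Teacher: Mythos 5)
Your proposal is correct and follows essentially the same route as the paper's proof: both construct $A = T\,\mathrm{diag}(\zero_{k,k},B)\,T^\inv$ with $B$ Hurwitz and $T$ chosen so that its first $k$ columns span $\mc M$ and its last $n-k$ columns span $\mc N$ (the paper obtains $T$ via Lemma~\ref{lem:diagonalizableProjection} applied to $\Pi_{\mc M,\mc N}$, you obtain it by completing a basis of $\mc M$ with one of $\mc N$ — the same matrix). Your added discussion of consistency with the complete graph matches the paper's remark following the proposition that zeros in allowed positions do not violate consistency.
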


\begin{proof}
For any pair of complementary subspaces $\mc M, \mc N,$ 
\cite[Thm. 2.26]{AG:03} guarantees the existence of an oblique projection 
matrix $\Pi_{\mc M, \mc N}$.
Moreover, by Lemma~\ref{lem:diagonalizableProjection}, there exists invertible 
$T_\Pi \in \real^{n \times n}$ such that $\Pi_{\mc M, \mc N}$ can be decomposed 
as
\begin{align*}
\Pi_{\mc M, \mc N} = T_\Pi \begin{bmatrix}
I_k & \zero \\ \zero & \zero
\end{bmatrix}
T_\Pi^\inv,
\end{align*}
where $k = \dim(\mc M)$. 
The statement follows by choosing $A$ as in~\eqref{eq:semiConvergentA} with 
$T=T_\Pi$ and by noting that, with this choice, 
$\lim_{t\rightarrow \infty} e^{At}x(0) = \Pi_{\mc M, \mc N} x(0)$.
%
\end{proof}

Proposition~\ref{prop:existenceWeights} provides a preliminary answer to 
Problem 1: if the communication graph is complete, a set of agents is globally 
$k$-agreement reachable on arbitrary weights, $\forall \ k \in \naturalpos$.
The proof is constructive, and it provides a way to derive agreement 
protocols --  see Algorithm~\ref{alg:constructA}.
We remark that, for some special choices of $\mc M, \mc N,$ one or more entries 
of $A$ may be identically zero (notice that such $A$ remain consistent with our 
definition of adjacency matrix for complete graphs -- see 
Section~\ref{sec:2}); in these cases, the protocol $A$ could also be 
implemented over a non-complete graph. However, in 
the general case, $A$ is nonsparse.

\begin{algorithm}
\caption{Construction of agreement matrix $A$}
\begin{algorithmic} \label{alg:constructA}
\REQUIRE $M \in \real^{n \times k}$ whose columns form a basis for $\mc M$
\REQUIRE $N \in \real^{n \times k}$ whose columns form a basis for $\mc N^\bot$
\STATE $\Pi_{\mc M, \mc N} \gets  M(N^\tsp M)^\inv N^\tsp$;
\STATE Determine $T$ such that 
$\Pi_{\mc M, \mc N} = T \begin{bmatrix} I_k & \zero \\ \zero & \zero \end{bmatrix} T^\inv$;
\STATE Choose $B \in \real^{(n -k) \times (n-k)}$ such that $\lmax{B}<0$;
\RETURN $ A = T \begin{bmatrix} \zero_k & \zero \\ \zero & B \end{bmatrix} T^\inv$;
\end{algorithmic}
\end{algorithm}
\vspace{-.5cm}

\subsection{Structural necessary conditions for agreement}
\label{sec:4-b}

While Proposition~\ref{prop:existenceWeights} shows that complete graphs 
can reach an agreement on arbitrary weights, it remains unclear whether 
this property also holds for graphs with weaker connectivity. 
We begin by showing that strong connectivity\footnote{Recall that strong 
connectivity is necessary and sufficient for global \textit{average} 
consensus reachability~\cite{RO-JF-RM:07}.} is necessary but not sufficient 
for agreement reachability on arbitrary weights.

\begin{lemma}{\bf \textit{(Necessity of strong connectivity)}}
\label{lem:strongConnectivity}
A set of agents is {globally $k$-agreement reachable on arbitrary weights} only 
if $\mc G$ is strongly connected. 
\QEDB\end{lemma}

\begin{proof}
When $\mc G$ is not strongly connected, for all $A$ consistent with $\mc G,$ at 
least one of the entries of $\lim_{t \rightarrow \infty }e^{At}$ is identically 
zero (this follows from $e^{At} = \sum_{i=0}^\infty \frac{A^i t^i}{i !}$  
and~\cite[Cor~4.5]{FB:19}). In this case, since $W=\lim_{t \rightarrow \infty }e^{At}$, an agreement cannot be reached on every $W$ such that $w_{ij}\neq 0 \ \forall \ i,j.$~
\end{proof}


\begin{example}{\bf \textit{(Strong connectivity is not sufficient for 
agreement on arbitrary weights)}}\label{ex:agreementNotPossible}
Assume that a network of $n=3$ agents is interested in agreeing on a space with 
$k=2$ by using a non-complete communication graph~$\mc G$.
By using Lemma~\ref{lem:spectralPropertiesA}, the agents are agreement reachable 
on arbitrary weights only if:
\begin{align}
\label{eq:auxChoiceA}
A = 
\underbrace{
\begin{bmatrix}
t_1 & t_2 & t_3
\end{bmatrix} }_{=T}
\begin{bmatrix}
0 & 0 & 0\\ 0 & 0 & 0\\ 0 & 0 & \beta
\end{bmatrix}
\underbrace{
\begin{bmatrix}
\tau_1 & \tau_2 & \tau_3
\end{bmatrix}^\tsp }_{=T^\inv} 
= \beta t_3 \tau_3^\tsp,
\end{align}
for some $\beta$ such that $\Re (\beta) <0$ and some $T\in \real^{3\times 3}$.
By~\eqref{eq:auxChoiceA}, $A$ must be a rank-one matrix and, since $\mc G$ is 
not complete, at least one of the entries of $A$ must be identically zero. 
These two properties imply that at least one of the rows or columns of $A$ must 
be identically zero, and thus that $\mc G$ cannot be strongly connected.
Since $\mc G$ is not strongly connected, by~\cite[Cor~4.5]{FB:19} at least one 
of the rows or columns of $W=\lim_{t \rightarrow \infty }e^{At}$ must be 
identically zero. 
In summary, we have found that the agents are globally $2$-agreement reachable 
on arbitrary weights only if $\mc G$ is the complete graph. 
\QEDB\end{example}

We will thus make the following necessary assumption.

\begin{assumption}{\bf \textit{(Strong connectivity)}}
\label{as:strongConnectivity}
The communication digraph $\mc G$ is strongly connected.
\QEDB\end{assumption}

\section{Agreement algorithms over sparse digraphs}
\label{sec:5}

While~\eqref{eq:semiConvergentA} gives a full characterization of 
agreement protocols and can be used to design agreement algorithms over 
complete graphs (cf.~Algorithm~\ref{alg:constructA}), it remains unclear how to 
design agreement protocols when $\mc G$ is not complete. This is the 
focus of this section. 
We will often use the following decomposition for $W$ (see 
Assumption~\ref{as:projectionWeights} and 
Lemma~\ref{lem:diagonalizableProjection}):
\begin{align}
\label{eq:decompositionW}
W &= 
T
\begin{bmatrix}
I_k & \zero \\ \zero & \zero
\end{bmatrix}
T^\inv,
\end{align}
where $T\in\real^{n \times n}$ is invertible.
Moreover, we  will use:
\begin{align}\label{eq:decompositionW_2}
T &=
\begin{bmatrix}
t_1 & \cdots & t_n 
\end{bmatrix},  &
T^\inv &=
\begin{bmatrix}
\tau_1 & \cdots & \tau_n 
\end{bmatrix}^\tsp, &
\end{align}
where $t_i, \tau_i \in \real^n,$ 
$i \in \until n$ (notice that $\tau_i^\tsp t_j=1$ if 
$i=j$ and $\tau_i^\tsp t_j=0$ otherwise).

\subsection{Algebraic conditions for sparse digraphs}
We will use a graph-theoretic interpretation of characteristic 
polynomials~\cite{KR:94}, which we recall next. 
Recall that $\mc C_\ell(\mc G)$ denotes the set of all $\ell$-long  
cycle families of $\mc G$ (see Section~\ref{sec:2}).
\begin{lemma}{\bf\textit{(~\!\!\cite[Thm.~1]{KR:94})}}
\label{lem:reinschkeCoefficientsCharPoly}
Let $\mc G$ be a digraph, let $A \in \mc A_{\mc G}$, and 
$\det (\lambda I - A) = 
\lambda^n + p_1 \lambda^{n-1} + \dots + p_{n-1} \lambda + p_n$ be its 
characteristic polynomial.
Then, for all $p_\ell, \ell \in \until n:$ 
\begin{align*}
p_\ell= \sum_{\xi \in \mc C_\ell(\mc G)} (\negat 1)^{d(\xi)} \prod_{(i,j) \in \xi} a_{ij},
\end{align*}
where $d(\xi)$ is the number of cycles in cycle family $\xi$.
\QEDB\end{lemma}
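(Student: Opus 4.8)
The plan is to prove the formula for the characteristic polynomial coefficients via the Leibniz expansion of the determinant $\det(\lambda I - A)$ together with the combinatorial structure of permutations. First I would recall that for an $n \times n$ matrix $B = \lambda I - A$, the Leibniz formula gives $\det(B) = \sum_{\sigma \in S_n} \operatorname{sgn}(\sigma) \prod_{i=1}^n b_{i\sigma(i)}$, where $b_{ii} = \lambda - a_{ii}$ and $b_{ij} = -a_{ij}$ for $i \neq j$. Expanding each product $\prod_i b_{i\sigma(i)}$ in powers of $\lambda$: the diagonal factors contribute either $\lambda$ or $-a_{ii}$, so a term of degree $n - \ell$ in $\lambda$ arises by selecting a subset $S \subseteq \{i : \sigma(i) = i\}$ of the fixed points of $\sigma$ of size $n - \ell$ to contribute their $\lambda$, while the remaining $\ell$ indices (comprising the non-fixed points of $\sigma$ plus the unselected fixed points) contribute $-a_{i\sigma(i)}$.

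Next I would reorganize this double sum by the set $J$ of $\ell$ indices that contribute their $-a_{i\sigma(i)}$ factor. On $J$, the permutation $\sigma$ restricts to a permutation of $J$ (since the complement consists of fixed points), and decomposing $\sigma|_J$ into disjoint cycles yields precisely a set of node-disjoint cycles in $\mc G$ whose lengths sum to $\ell$ — that is, a Hamiltonian $\ell$-decomposition $\xi \in \mc C_\ell(\mc G)$. The key sign bookkeeping is then: $\operatorname{sgn}(\sigma) = \operatorname{sgn}(\sigma|_J)$ since the complement is fixed, and a permutation of $J$ that is a product of $d(\xi)$ disjoint cycles of total length $\ell$ has sign $(-1)^{\ell - d(\xi)}$; combined with the $\ell$ factors of $-1$ from the $(-a_{i\sigma(i)})$ terms, the total sign is $(-1)^\ell \cdot (-1)^{\ell - d(\xi)} = (-1)^{-d(\xi)} = (-1)^{d(\xi)}$. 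Meanwhile the product of the matrix entries $\prod_{i \in J}(-a_{i\sigma(i)})$ up to that sign is $\prod_{(i,j)\in\xi} a_{ij}$, and a term is nonzero only when every edge $(i,j)$ appearing actually lies in $\mc E$ (else $a_{ij} = 0$), so only genuine decompositions of $\mc G$ survive. Matching the coefficient of $\lambda^{n-\ell}$ then gives $p_\ell = \sum_{\xi \in \mc C_\ell(\mc G)} (-1)^{d(\xi)} \prod_{(i,j)\in\xi} a_{ij}$.

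I expect the main obstacle to be the careful sign accounting — tracking the interplay between $\operatorname{sgn}(\sigma)$, the number of $(-1)$ factors from the off-diagonal entries of $\lambda I - A$, and the cycle count $d(\xi)$ — and making sure the bijection between (permutation, choice of $\lambda$-contributing fixed points) pairs and (Hamiltonian $\ell$-decomposition) is set up cleanly. A secondary subtlety is confirming that distinct permutations giving the same $\xi$ are handled correctly: in fact the correspondence $\sigma \mapsto \xi$ restricted to the relevant index set $J$ is a bijection onto decompositions once $J$ is fixed, so no overcounting occurs. Since this is a known result from~\cite{KR:94}, an alternative and shorter route is simply to cite the reference and sketch only the sign normalization that reconciles the "cycle family" terminology of~\cite{KR:94} with the "Hamiltonian decomposition" language adopted here; but the self-contained Leibniz-expansion argument above is the approach I would write out if a proof is expected.
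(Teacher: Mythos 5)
Your proof is correct, but note that the paper does not actually prove this lemma at all: it is imported verbatim as \cite[Thm.~1]{KR:94}, so there is nothing in the text to compare against step by step. Your Leibniz-expansion argument is the standard self-contained derivation of Reinschke's coefficient formula, and the details check out: grouping terms by the set $J$ of $\ell$ indices that contribute an off-$\lambda$ factor, observing that $\sigma$ restricted to $J$ is a permutation whose cycle decomposition is exactly a Hamiltonian $\ell$-decomposition $\xi$ (with $1$-cycles corresponding to self-cycles $a_{ii}$), and combining $\operatorname{sgn}(\sigma|_J)=(-1)^{\ell-d(\xi)}$ with the $\ell$ minus signs from the entries of $\lambda I - A$ gives $(-1)^{2\ell-d(\xi)}=(-1)^{d(\xi)}$, which agrees with the signs in the paper's worked example ($p_1=-a_{11}$, $p_3 = -a_{13}a_{21}a_{32}+a_{11}a_{23}a_{32}-a_{23}a_{42}a_{34}$, etc.). Your remark that the $(\sigma,J)\mapsto\xi$ correspondence is a bijection, so no overcounting occurs, closes the only real gap. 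The one point worth making explicit if you write this out is the paper's edge convention --- $a_{ij}\neq 0$ only if $(i,j)\in\mc E$, an edge from $j$ to $i$ --- so the cycle $a_{i_1 i_2}a_{i_2 i_3}\cdots a_{i_m i_1}$ arising from a permutation cycle traverses the corresponding graph cycle in reverse; this does not affect the statement since the product of weights and the cycle count are the same, but it is the kind of orientation detail that trips people up. What your approach buys over the paper's bare citation is a self-contained, elementary verification; what the citation buys is brevity and alignment with the structural-systems literature the rest of Section VI builds on.
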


The lemma provides a graph-theoretic description of the characteristic 
polynomial: it shows that the $\ell$-th coefficient of $\det (\lambda I - A)$ 
is a sum of terms; each summand is the product of edges in a cycle family of 
length $\ell$ of $\mc G$. 

\begin{example}
Consider the digraph in \figurename~\ref{fig:cycleFamilies}(a). We have:
\begin{align*}
\bA_{\mc G}(a) = \begin{bmatrix}
a_{11} & 0 & a_{13} & 0\\
a_{21} & 0 & a_{23} & 0\\
0 & a_{32} & 0 & a_{34}\\
0 & a_{42} & 0 & 0\\
\end{bmatrix},
\end{align*}
and we refer to \figurename~\ref{fig:cycleFamilies}(b)-(c) for an illustration 
of all cycle families of this graph.
Lemma~\ref{lem:reinschkeCoefficientsCharPoly} yields:
\begin{align*}
p_1 &= -a_{11}, & 
p_{3} &= -a_{13}a_{21}a_{32} + a_{11}a_{23}a_{32} - a_{23}a_{42}a_{34},\\
p_2 &= -a_{23}a_{32}  &
p_4 &= -a_{13}a_{21}a_{42}a_{34} + a_{11}a_{23}a_{34}a_{42}.
\end{align*}
Notice that each summand in $p_\ell$ is the product of weights in a cycle 
family of the corresponding size (cf.~\figurename~\ref{fig:cycleFamilies}(b)-(c)).
\QEDB\end{example}

The following result is one of the main contributions of this paper.
Recall that for $a \in \real^{\vert \mc E \vert},$ $\bA_{\mc G}(a)$ is the  
matrix consistent with $\mc G$ whose entries are parametrized by $a.$

\begin{theorem}{\bf \textit{(Algebraic characterization of sparse agreement matrices)}}
\label{thm:algebraicCharacterization}
Let Assumptions~\ref{as:projectionWeights}-\ref{as:strongConnectivity} hold.
$\dot x = \bA_{\mc G}(a)x$ globally asymptotically reaches a 
$k$-dimensional agreement on $W$ if and only if the following hold simultaneously:
\begin{itemize}
\item[\textit{(i)}] $\bA_{\mc G} (a)  t_i = 0, \quad 
\tau_i^\tsp \bA_{\mc G} (a) = 0,  \quad\quad  \forall i \in \until k$;
\vspace{.2cm}
\item[\textit{(ii)}] The polynomial 
$\lambda^{n-k-1} + p_1 \lambda ^{n-k-2} + \dots + p_{n-k-1}$, whose coefficients are defined as
\begin{align*}
p_\ell &= \sum_{\xi \in \mc C_\ell(\mc G)} (\negat 1)^{d(\xi)} \prod_{(i,j) \in \xi} a_{ij},
& \ell \in \until{n-k},
\end{align*}
is stable.
\QEDB
\end{itemize}
\end{theorem}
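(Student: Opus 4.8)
The plan is to show that conditions \textit{(i)} and \textit{(ii)} together are equivalent to $A := \bA_{\mc G}(a)$ having the semiconvergent form~\eqref{eq:semiConvergentA} with the \emph{specific} similarity transformation $T = T_W$ from~\eqref{eq:decompositionW}, so that $\lim_{t\to\infty} e^{At} = W$ exactly; by Lemma~\ref{lem:spectralPropertiesA} and Definition~\ref{def:kDimensAgree} this is what reaching a $k$-dimensional agreement on $W$ means. First I would observe that, since $W^2 = W$ and $\rank W = k$, the columns $\{t_1,\dots,t_k\}$ of $T_W$ span $\im(W)$ and $\{\tau_1,\dots,\tau_k\}$ span $\ker(W)^\bot = \im(W^\tsp)$, with the biorthogonality relations $\tau_i^\tsp t_j = \delta_{ij}$ noted after~\eqref{eq:decompositionW_2}. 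The key algebraic fact is this: for a fixed invertible $T_W$, a matrix $A$ can be written as $A = T_W \operatorname{diag}(0_{k,k}, B) T_W^\inv$ for \emph{some} $B \in \real^{(n-k)\times(n-k)}$ if and only if $A t_i = 0$ and $\tau_i^\tsp A = 0$ for all $i \in \until k$. The forward direction is immediate by block multiplication; for the converse, $A t_i = 0$ for $i \le k$ says the first $k$ columns of $T_W^\inv A T_W$ vanish, and $\tau_i^\tsp A = 0$ for $i \le k$ says the first $k$ rows vanish, leaving precisely the block form with $B$ equal to the trailing $(n-k)\times(n-k)$ block. This handles condition~\textit{(i)}.

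Next I would translate the convergence requirement on the "$B$ part" into condition~\textit{(ii)}. Given~\textit{(i)}, we have $\sigma(A) = \{0\} \cup \sigma(B)$ (with $0$ of algebraic and geometric multiplicity exactly $k$), and $\lim_{t\to\infty} e^{At} = T_W \operatorname{diag}(I_k, 0) T_W^\inv$; comparing with~\eqref{eq:decompositionW} this limit equals $W$. By Lemma~\ref{lem:spectralPropertiesA} (the "only if" direction / \cite[Lemma 1.7]{FB:19}), the limit exists and the eigenvalue structure at the origin is correct precisely when $\lmax{B} < 0$, i.e.\ the characteristic polynomial of $B$, which has degree $n-k$, is stable. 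So I must express stability of $\chi_B$ in terms of the $a_{ij}$. Here I invoke Lemma~\ref{lem:reinschkeCoefficientsCharPoly}: since $A$ has the $0$-eigenvalue of multiplicity exactly $k$, its characteristic polynomial factors as $\lambda^k \cdot \chi_B(\lambda)$, so $\chi_B(\lambda) = \lambda^{n-k} + p_1 \lambda^{n-k-1} + \dots + p_{n-k}$ where $p_\ell$ is the $\ell$-th coefficient of $\det(\lambda I - A)$ — and Lemma~\ref{lem:reinschkeCoefficientsCharPoly} gives each $p_\ell$ as the stated cycle-family sum over $\mc C_\ell(\mc G)$. Moreover, under condition~\textit{(i)} one has $p_{n-k} = 0$ as well (because $\det A = 0$ with the right order of vanishing forces the constant term of $\chi_B$, times $\lambda^k$, to be consistent — more carefully, $\chi_B(0) \ne 0$ since $B$ is to be Hurwitz, so actually $p_{n-k}$ is the constant term of $\chi_B$ and the coefficient list in~\textit{(ii)} runs only up to $p_{n-k-1}$ because of a normalization; I would need to reconcile the indexing so that the degree-$(n-k-1)$ polynomial in~\textit{(ii)} is really $\chi_B$ divided by its leading-to-... ). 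This indexing reconciliation — why the polynomial in~\textit{(ii)} has degree $n-k-1$ rather than $n-k$ — is the point I would be most careful about: it must come from the fact that condition~\textit{(i)} already forces one extra coefficient relation, effectively fixing $p_{n-k}$ (or $p_{n-k}$ together with the structure), so that stability of the full $\chi_B$ reduces to stability of the displayed lower-degree polynomial.

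Assembling the two directions: if the update reaches agreement on $W$, then $\lim e^{At} = W$, so $A = T_W \operatorname{diag}(0,B) T_W^\inv$ with $B$ Hurwitz (Lemma~\ref{lem:spectralPropertiesA} plus uniqueness of the limit pinning down $T$ as $T_W$), which by the column/row characterization gives~\textit{(i)} and by Lemma~\ref{lem:reinschkeCoefficientsCharPoly} gives~\textit{(ii)}. Conversely, given~\textit{(i)} and~\textit{(ii)}, condition~\textit{(i)} puts $A$ in the block form with some trailing block $B$, condition~\textit{(ii)} says $\chi_B$ is stable hence $\lmax{B} < 0$, so $\lim_{t\to\infty} e^{At} = T_W \operatorname{diag}(I_k,0) T_W^\inv = W$, i.e.\ $\lim_{t\to\infty} x(t) = W x(0)$ for all $x(0)$. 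Strong connectivity (Assumption~\ref{as:strongConnectivity}) and $W$ being a projection (Assumption~\ref{as:projectionWeights}) are used to make the setup well-posed and to guarantee $T_W$ as in~\eqref{eq:decompositionW} exists; I do not expect them to enter the equivalence itself beyond that. The main obstacle, as flagged, is getting the bookkeeping on the characteristic-polynomial coefficients exactly right — in particular justifying cleanly why the stable polynomial in~\textit{(ii)} carries degree $n-k-1$, which I would resolve by showing condition~\textit{(i)} forces $\det(\lambda I - A)$ to be divisible by $\lambda^{k+1}$ minus a correction, equivalently that $p_{n-k}$ is determined and the remaining free data is the degree-$(n-k-1)$ tail.
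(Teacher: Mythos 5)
Your overall route is the same as the paper's: show that condition \textit{(i)} is equivalent to $\bA_{\mc G}(a)$ having the block form $T_W\,\diag(\zero_{k,k},B)\,T_W^\inv$, and then reduce convergence of $e^{At}$ to $W$ to stability of the trailing block $B$ via the characteristic polynomial and Lemma~\ref{lem:reinschkeCoefficientsCharPoly}. Your treatment of \textit{(i)} is in fact cleaner than the paper's: you observe directly that $A t_i=0$ annihilates the first $k$ columns of $T_W^\inv A T_W$ and $\tau_i^\tsp A=0$ annihilates the first $k$ rows, which yields the block form in one step, whereas the paper argues through a diagonalizable/Jordan-form case split. You also correctly note that in the ``only if'' direction one must use $\lim_{t\to\infty}e^{At}=W$ to force the first $k$ columns of the $T$ in~\eqref{eq:semiConvergentA} to span $\im(W)=\operatorname{span}\{t_1,\dots,t_k\}$ (and dually for the rows of $T^\inv$); the paper silently identifies that $T$ with $T_W$.

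The place where your argument would actually break is the one you flag yourself: the degree bookkeeping in \textit{(ii)}. Under condition \textit{(i)} the characteristic polynomial factors as $\det(\lambda I-A)=\lambda^{k}\,\chi_B(\lambda)$ with $\chi_B(\lambda)=\lambda^{n-k}+p_1\lambda^{n-k-1}+\dots+p_{n-k}$ of degree $n-k$, where $p_1,\dots,p_{n-k}$ are exactly the cycle-family sums of Lemma~\ref{lem:reinschkeCoefficientsCharPoly} and $p_\ell=0$ holds automatically for $\ell>n-k$. Stability must be required of this degree-$(n-k)$ polynomial; the degree-$(n-k-1)$ polynomial displayed in \textit{(ii)} is an off-by-one slip that the paper's own proof reproduces (there $\det(\lambda I-A)$ is written as ending in $p_{n-k-1}\lambda^{k-1}$, which is inconsistent with the factorization $\lambda^k\chi_B(\lambda)$). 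Your proposed reconciliation --- that \textit{(i)} ``fixes $p_{n-k}$'' or forces divisibility of $\det(\lambda I - A)$ by $\lambda^{k+1}$ --- cannot be made to work: divisibility by $\lambda^{k+1}$ would give the zero eigenvalue algebraic multiplicity at least $k+1$, contradicting $\rank{W}=k$, and $p_{n-k}=(-1)^{n-k}\det B$ is a free parameter that the stability requirement must constrain to be nonzero, not a quantity determined by \textit{(i)}. The correct repair is simply to read \textit{(ii)} with the polynomial $\lambda^{n-k}+p_1\lambda^{n-k-1}+\dots+p_{n-k}$; with that emendation the rest of your argument goes through.
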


\begin{proof}
\textit{(If)}  Let $A$ be any matrix that satisfies \textit{(i)-(ii)}. If $A$ 
is diagonalizable, then, by letting $T = (t_1, \cdots, t_n)$ 
be the matrix of its  right eigenvectors and 
$(T^\inv)^\tsp = ( \tau_1, \cdots, \tau_n)$ 
be the matrix of its  left eigenvectors, we 
conclude that $A$ satisfies~\eqref{eq:semiConvergentA} and thus the linear 
update reaches an agreement on $W$. If $A$ is not diagonalizable, let $T$ be a
similarity transformation such that $T^\inv A T$ is in Jordan normal form:
\begin{align*}
T^\inv A T = 
\begin{bmatrix}
J_{\lambda_1} \\
& J_{\lambda_2}\\
&& \ddots \\
&&& J_{\lambda_n}
\end{bmatrix},
J_{\lambda_i} = 
\begin{bmatrix}
\lambda_1 & 1 \\
& \ddots & \ddots \\
&& \lambda_1 \\
\end{bmatrix},
\end{align*}
From \textit{(i)} we conclude that $\lambda=0$ is an eigenvalue with algebraic 
multiplicity $k$, moreover, since the vectors $t_i$ are linearly independent 
(see~\eqref{eq:decompositionW}), we conclude that its geometric multiplicity is 
also equal to $k$, and thus all Jordan blocks associated with $\lambda=0$ have 
dimension $1$. Namely, $J_{\lambda_1} = \dots = J_{\lambda_k} =0$.
By combining this with \textit{(ii)}, we conclude that the characteristic 
polynomial of $A$ is 
\begin{align*}
\det(\lambda I - A) 
& = \lambda^n + p_1 \lambda ^{n-1} + \dots + p_{n-k-1} \lambda^{k-1},
\end{align*}
and, since by assumption such polynomial is stable, we conclude that all 
remaining eigenvalues $\{ \lambda_{k+1}, \dots , \lambda_{n} \}$ of $A$ satisfy 
$\Re(\lambda_i)<0$.
Since all Jordan blocks associated with $\lambda=0$ have dimension $1$ and 
all the remaining eigenvalues of $A$ are stable, we conclude that $A$ admits 
the representation~\eqref{eq:semiConvergentA} and thus the linear 
update reaches an agreement.

\textit{(Only if)} We will prove this claim by showing 
that~\eqref{eq:semiConvergentA} implies \textit{(i)-(ii)}.
To prove that \textit{(i)} holds, we rewrite~\eqref{eq:semiConvergentA} as
\begin{align*}
T^\inv A T = \begin{bmatrix}
\zero_{} & \zero_{} \\ \zero_{} & B
\end{bmatrix},
\end{align*}
and, by taking the first $k$ columns of the above identity we conclude 
$A t_i =0$, $i \in \until k$, thus showing that \textit{(i)} holds.
To prove that \textit{(ii)} holds, notice that~\eqref{eq:semiConvergentA} 
implies that the characteristic polynomial of $A$ is a stable polynomial with 
$k$ roots at zero. Namely, 
\begin{align*}
\det(\lambda I - A) &= \lambda^k(\lambda-\lambda_1)(\lambda-\lambda_2) \cdots (\lambda-\lambda_{n-k})\\
& = \lambda^n + p_1 \lambda ^{n-1} + \dots + p_{n-k-1} \lambda^{k-1},
\end{align*}
where $\Re(\lambda_i)<0, i \in \until{n-k},$ and $p_j, j \in \until{n-k-1},$ 
are nonzero real coefficients.
The statement \textit{(ii)} thus follows by applying the graph-theoretic 
interpretation of the coefficients of the characteristic polynomial in 
Lemma~\ref{lem:reinschkeCoefficientsCharPoly}.
\end{proof}

Theorem~\ref{thm:algebraicCharacterization} provides an algebraic 
characterization of agreement protocols over sparse digraphs. The result
is remarkable as it can be used to design sparse agreement protocols as 
follows. Given $\mc G$ and $W$, we interpret $a$ as well as 
$p_1, \dots p_{n-k}$ as free parameters or unknowns; then, \textit{(i)-(ii)} 
define a system of equations (precisely, $2nk$ linear equations and $n-k$ 
multilinear polynomial equations) in these unknowns.
Any solution to this system of equations -- yielding a stable characteristic 
polynomial -- gives an agreement protocol on $W$ consistent with $\mc G.$
Notice that the solvability of these equations is not guaranteed in general, 
but it can be assessed via standard techniques, as discussed in the following 
remark.
%

\begin{remark}{\bf \textit{(Determining solutions to systems of polynomial equations)}}
A powerful technique for determining solutions to systems of polynomial 
equations uses the tool of Gr\"obner bases, as applied using 
Buchberger's algorithm~\cite{DC-JL-DO:13}. The technique relies on transforming 
the system of equations into a canonical form, expressed in terms of a 
Gr\"obner basis, for which it is then easier to determine a solution. 
We refer to \cite{DC-JL-DO:13,MK-LR:00} for a complete discussion.
Furthermore, existence of solutions can be assessed using Hilbert's 
Nullstellensatz theorem~\cite{MK-LR:00}.
In short, the theorem guarantees that a system of polynomial equations has no 
solution if and only if its Gr\"obner basis is $\{1\}$. In this sense, the 
Gr\"obner basis 
method provides an easy way to check solvability of \textit{(i)-(ii)}. 
We also note that the computational complexity of solving systems of polynomial 
equations via Gr{\"o}bner bases is exponential~\cite{MK-LR:00}.~
\QEDB\end{remark}

\subsection{Fast distributed agreement algorithms}
We next tackle Problem~2.
The freedom in the choice of $p_1, \dots , p_{n-k}$ in 
Theorem~\ref{thm:algebraicCharacterization} suggests that a certain graph may 
admit multiple consistent agreement protocols.
We will now leverage such freedom to seek protocols with optimal rate of 
convergence. Problem~2 can be made formal as follows:
\begin{align}\label{opt:informalMaxRate}
\underset{A}{\text{min}} ~~~&  r(A)  \nonumber\\
\text{s.t.} ~~~ & A \in \mc A_{\mc G}, \quad \lim_{t \rightarrow \infty } e^{A t} = W. 
\end{align}
In~\eqref{opt:informalMaxRate}, $\map{r}{\real^{n \times n}}{\real}$ is a 
function that measures the rate of convergence of $e^{At}$.
By Lemma~\ref{lem:spectralPropertiesA}, the 
optimization~\eqref{opt:informalMaxRate} is feasible if and only 
if~\eqref{eq:semiConvergentA} holds with $T$ given 
by~\eqref{eq:decompositionW}.

When the optimization problem~\eqref{opt:informalMaxRate} is feasible,
it is natural to consider two possible choices for the cost function 
$r(\cdot)$, 
motivated by the size of $\norm{e^{At}}$ as a function of time.
The first limiting case is $t\rightarrow \infty$. In this case, we consider the 
following asymptotic measure of convergence motivated 
by~\cite[Ch.~14]{LT-ME:05}:
\begin{align}
\label{eq:spectralAbscissa}
r_\infty(A) := \lim_{t \rightarrow \infty } t^\inv \log \norm{e^{At}} = \lmax{A},
\end{align}
where $\lmax{A}$ is the spectral abscissa of $A$ (see Section~\ref{sec:2}).
The second limiting case is $t \rightarrow 0$. In this case:
\scalebox{.94}{\parbox{\linewidth}{
\begin{align}
\label{eq:numericalAbscissa}
r_0(A) := \frac{d}{dt} \norm{e^{At}} \bigg\vert_{t=0} 
&= \lim_{t \downarrow 0} t^\inv \log \norm{e^{At}} 
= \lmax{\frac{A + A^\tsp}{2}},
\end{align}
}}

\noindent where $ \lmax{\frac{A + A^\tsp}{2}}$ is the numerical abscissa of $A$~\cite{LT-ME:05}.

We have the following result.
\begin{proposition}{\bf \textit{(Fast agreement problem)}}\label{prop:spectral_abscissa}
Let Assumptions~\ref{as:projectionWeights}-\ref{as:strongConnectivity} hold.
Assume that the optimization problem~\eqref{opt:informalMaxRate} is feasible.
Any solution to the following  optimization problem:
\begin{align}\label{opt:formalSpectralAbscissa}
\min_{a \in \real^{\vert \mc E \vert}} ~~&  r(\bA_\mc G(a)) \\
\text{s.t.} ~~~ & \bA_\mc G(a) t_i=0, ~~\tau_i^\tsp \bA_\mc G(a)=0,  \quad\quad  i \in \until k,\nonumber
\end{align}
where $t_i, \tau_i$ are as in~\eqref{eq:decompositionW_2}, 
is also a solution of~\eqref{opt:informalMaxRate}.
\QEDB\end{proposition}

\begin{proof}
Since the optimization problem~\eqref{opt:informalMaxRate} is feasible, 
condition \textit{(i)} of Theorem~\ref{thm:algebraicCharacterization} 
guarantees that~\eqref{opt:formalSpectralAbscissa} is also feasible and that 
$\lmax{\bA_\mc G(a)}<0$.
Let $a^*$ denote a solution of~\eqref{opt:formalSpectralAbscissa}, and let 
$A = \bA_\mc G (a^*)$. By construction, we have $A \in \mc A_\mc G$, while the 
two constraints in~\eqref{opt:formalSpectralAbscissa}, {together with 
$\lmax{A}<0$ (which is guaranteed by feasibility),} guarantee that 
$\lim_{t \rightarrow \infty } e^{A t} = W$, which shows that $a^*$ is a 
feasible point for~\eqref{opt:informalMaxRate}. 
The claim thus follows by noting that the cost functions of~\eqref{opt:informalMaxRate} and that 
of~\eqref{opt:formalSpectralAbscissa} coincide. 
\end{proof}

Proposition~\ref{prop:spectral_abscissa} allows us to 
recast~\eqref{opt:informalMaxRate} as a finite-dimensional search over the 
parameters $a \in \real^{\vert \mc E \vert}$.
We remark that~\eqref{opt:formalSpectralAbscissa} with the numerical abscissa 
formulation~\eqref{eq:numericalAbscissa} is a convex optimization 
problem~\cite{JB-MO:01}, while with the spectral abscissa 
formulation~\eqref{eq:spectralAbscissa}, finding solutions may be 
computationally burdensome because the objective function may be non-convex 
(or even non-Lipschitz~\cite{JB-MO:01}).

\section{Applications and numerical validation}
\label{sec:8}

Consider a distributed estimation problem characterized by a regression 
model of the form $y=H\theta + w$, where $H \in \real^{n \times k}, n>k,$ 
$\theta \in \real^k$ is an unknown parameter, and $w \in \real^{n}$ models 
noise.
We assume that each agent $i$ can sense the $i$-th entry of vector $y$, 
denoted by $y_i$, and the group of agents is interested in cooperatively solving 
the  regression problem:
\begin{align}\label{eq:regression}
\sbs{\theta}{ls} := \arg \min_\theta \norm{H\theta-y}.
\end{align}
It is well-known that $\sbs{\theta}{ls}$ is given by
$
\sbs{\theta}{ls} = (H^\tsp H)^\inv H^\tsp y,
$
provided that $H^\tsp H$ is invertible.
Thus, the vector to be computed by the agents (de-noised measurements) is:
\begin{align*}
\hat y = H \sbs{\theta}{ls} = H (H^\tsp H)^\inv H^\tsp y,
\end{align*}
which is the orthogonal projection of $y$ onto $\im(H)$. 
For figure illustration purposes, we consider the case 
$n=50$ (meaning $n=50$ agents or sensors in the network) and $k=2$ (meaning the
sensor measurements is interpolated using a line). 
We computed an agreement protocol using the optimization
problem~\eqref{eq:numericalAbscissa}-\eqref{opt:formalSpectralAbscissa} with 
weights $W = H (H^\tsp H)^\inv H^\tsp$ and implemented on a circulant 
graph~\cite{RO-RM:04}, where each agent communicates with its $4$ nearest
neighbors.
Fig.~\ref{fig:regression}(top) shows the sampling points $y$ and asymptotic 
estimates $\hat y$ in comparison with the true regression model. As expected, 
\eqref{eq:systemDistributed}  converges to the data points corresponding to the 
Mean Square Error Estimator.
Fig.~\ref{fig:regression}(bottom) shows the trajectories of the agents' states. 
Notice that the agreement state is a $50$-dimensional vector constrained to a 
$2$-dimensional subspace.

\begin{figure}[t]
\centering
\subfigure{\includegraphics[width=.99\columnwidth]{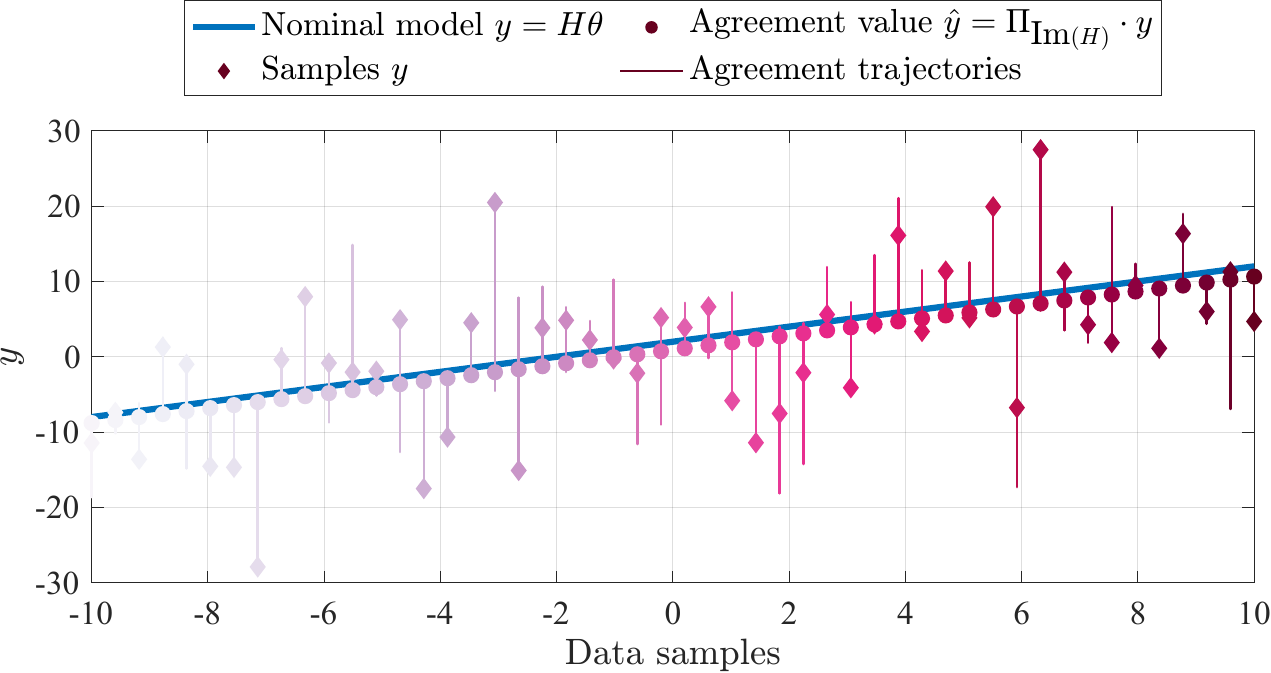}} \\
\subfigure{\includegraphics[width=\columnwidth]{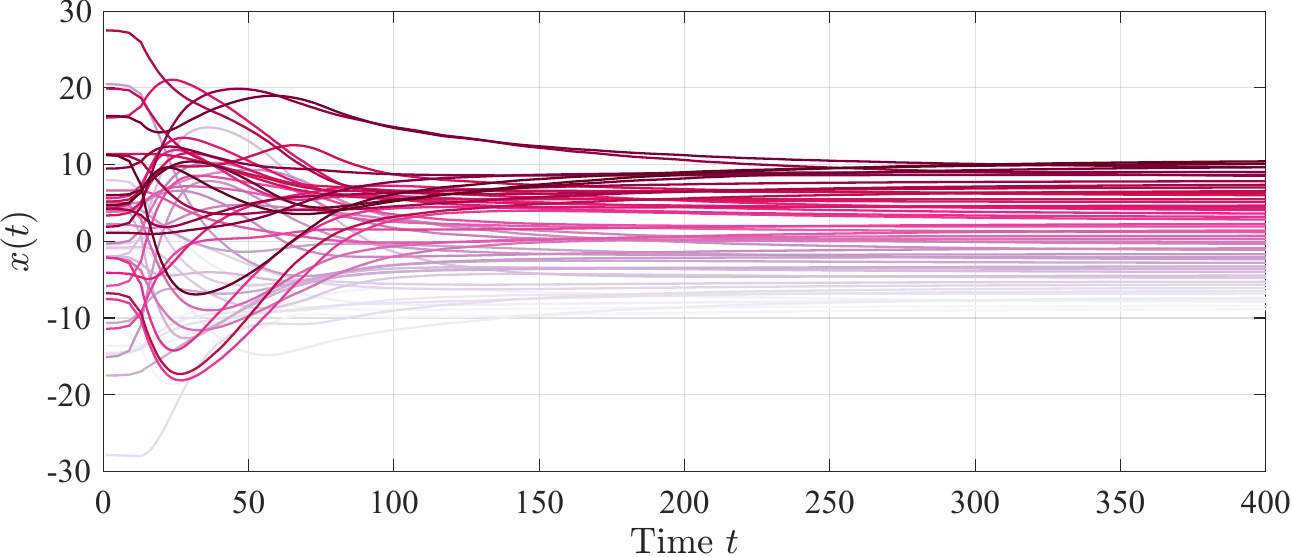}} 
\caption{Application of the agreement problem to solve a regression 
problem. Each agent can measure a sample $y_i$ (represented by diamond 
markers) and cooperatively computes the projection of $\hat y_i$ onto the range 
of the regression matrix $H.$ (Top figure) continuous lines illustrate the time
evolution of the states of~\eqref{eq:systemDistributed}. (Bottom figure) Time 
evolution of the trajectories of~\eqref{eq:systemDistributed}. Notice that the 
agents' states do not converge to the same value.}
\label{fig:regression}
\vspace{-.5cm}
\end{figure}


\section{Conclusions}
\label{sec:9}
We studied the $k$-dimensional agreement problem, whereby a group of 
agents seeks to compute $k$ independent weighted means of the agents' initial 
states.
We provided algebraic conditions to check the feasibility of the problem and 
algorithms to design such protocols. Our results show that agreement 
protocols can compute several weighted means of the agents' initial conditions
at a fraction of the complexity of existing consensus algorithms.
This work opens the opportunity for multiple directions of future research; 
among them, we highlight the derivation of graph-theoretic conditions to solve 
Problem 1, the design of agreement protocols in a distributed way, the use of 
nonlinear dynamics, and the synthesis of distributed  protocols to solve 
optimization problems over networks.

\bibliographystyle{IEEEtran}
\bibliography{BIB/alias,BIB/combined,BIB/main_GB,BIB/GB,BIB/additional_bib}

\end{document}